\theoremstyle{plain}
          \newtheorem{theorem}{Theorem}[section]
          \newtheorem{lemma}[theorem]{Lemma}
           \newtheorem{conj}[theorem]{Conjecture}
           \newtheorem{proposition}[theorem]{Proposition}
        \theoremstyle{definition}
          \newtheorem{example}[theorem]{Example}
          \newtheorem{remark}[theorem]{Remark}
          \newtheorem{defn}[theorem]{Definition}
\numberwithin{equation}{section}
\definecolor{amber}{rgb}{1.0, 0.75, 0.0}
\definecolor{ao}{rgb}{0.0, 0.50, 0.0}
\title{Correlations in the continuous multispecies 
 TASEP on a ring}
\author{Nimisha Pahuja}
    \address{Department of Mathematics, Indian Institute of Science, Bangalore - 560012}
    \email{nimishap@iisc.ac.in}
\author{Surjadipta De Sarkar}
    \address{Department of Mathematics, Indian Institute of Science, Bangalore - 560012}
    \email{surjadiptade@iisc.ac.in}
\DeclareMathOperator{\iden}{\langle 1^n \rangle}
\begin{document}

\begin{abstract}
In this paper, we prove a conjecture proposed by Aas and Linusson regarding the two-point correlations of adjacent particles in a continuous multispecies TASEP on a ring (AIHPD, 2018). We use the theory of multiline queues, as devised by Ferrari and Martin (AOP, 2008) to interpret the conjecture in terms of the placements of numbers in triangular arrays. Further, we use projections to calculate the correlations in the continuous multispecies TASEP using a distribution on these placements.
\end{abstract}

\keywords{multispecies, correlation, TASEP, continuous, lumping}
%
%\subjclass{}
%
\maketitle
%%%%%%%%%%%%%%%%%%%%%%%%%%%%%%%%%%%%%%%%%%%%%%%%%%%%%%%%%%%%%%%%

%%%%%%%%%%%%%%%%%%%%%%%%%%%%%%%%%%%%%%%%%%%%%%%%%%%%%%%%%%%%%%%%%%%%%%%%%

\section{Introduction}\label{sec:Intro}

Multispecies exclusion processes and their combinatorial properties have been a popular topic of investigation in recent times~\cite{Intro_AAV, Intro_angel, uchiyama2005correlation, arita2009spectrum}. One property which is of great interest
is the correlation of two or more particles in the stationary distribution of the process~\cite{Intro_AALP_Phase,aaslinusson,ayyerlinusson}. The
aim of this paper is to prove a conjecture of Aas and Linusson~\cite{aaslinusson} on correlations
of two adjacent points in a multispecies totally asymmetric exclusion process (TASEP)
on a continuous ring.

A lot of attention has been given to various properties of the multispecies TASEP in recent years. Connections between TASEPs and various mathematical structures, including affine Weyl groups~\cite{Intro_Lam, Intro_AALP_Weyl}, Macdonald polynomials~\cite{Intro_CMW_Macdonald}, Schubert polynomials~\cite{Intro_Kim_Williams}, and multiline queues~\cite{ferrarimartin} have been explored extensively. Multispecies TASEP on a ring has been studied in \cite{Intro_AL_inhomo, Intro_Aas_Jonas, Intro_Cantini}. Ayyer and Linusson~\cite{ayyerlinusson} studied multispecies TASEP on a ring, where they proved conjectures by Lam~\cite{Intro_Lam} on random reduced words in an affine Weyl group and gave results on two-point and three-point correlations.

One of the first instances where the continuous multispecies TASEP on a ring was mentioned is by Aas and Linusson~\cite{aaslinusson}. They studied a distribution that should be a certain infinite limit of the stationary distribution of multispecies TASEP on a ring. They also conjectured~\cite[Conjecture 4.2]{aaslinusson} a formula for correlations $c_{i,j}$ which is given by the probability that the two particles, labelled $i$ and $j$ are next to each other with $i$ on the left of $j$ in the limit distribution. We prove this conjecture first for the case $i>j$  (Theorem~\ref{thm:i>j}) in Section~\ref{sec:i>j} and then for the case $i<j$ (Theorem~\ref{thm:i<j}) in Section~\ref{sec:i<j}.  The technique we use is similar to and inspired by the work of Ayyer and Linusson~\cite{ayyerlinusson}, where they study correlations in multispecies TASEP on a ring with a finite number of sites.

To carry out the analysis, we use the theory of \textit{multiline process} that Ferrari and Martin described in~\cite{ferrarimartin}. The multiline process is defined on structures known as \textit{multiline queues} or MLQs. This process can be projected to the multispecies TASEP using a procedure known as \textit{lumping of chains} (see~\cite[Lemma 2.5]{LPW_book}). This lets us study the stationary distribution of the multiline process to infer results for the stationary distribution of the multispecies TASEP. It is defined using an algorithm known as \textit{bully path projection}, which projects a multiline queue to a word. See Section~\ref{sec:pre} for the precise definitions. 

We study the two-point correlations in a continuous TASEP with type $\iden = \underbrace{(1,\ldots,1)}_n$. In this case, there are $n$ particles, each with a distinct label from the set $[n]=\{1,\ldots,n\}$. Let $c_{i,j}(n)$ denote the probability that two particles, labelled $i$ and $j$, lie adjacent on a ring in the limiting distribution for the continuous multispecies TASEP with type $\iden$, with $i$ followed by $j$.  Aas and Linusson gave an explicit conjecture (\cite[Conjecture 4.2]{aaslinusson})  for calculating $c_{i,j}(n)$. We prove their conjecture in this paper separately for the two cases. 

\begin{theorem}\label{thm:i>j}
 For $n \geq 2$ and $i>j$, we have the following two-point correlations:
\[c_{i,j}(n)=\begin{cases}
\frac{n}{\binom{n+j}{2}}-\frac{n}{\binom{n+i}{2}}, & \text{ if }j < i < n, \\
\frac{n(j+1)}{\binom{n+j}{2}}-\frac{n(j-1)}{\binom{n+j-1}{2}}-\frac{n}{\binom{2n}{2}}, & \text{ if } j < i = n. 
\end{cases}\]
\end{theorem}
\begin{theorem}\label{thm:i<j}
For $n \geq 2$ and $i<j$, we have the following two-point correlations:
\[c_{i,j}(n)=\begin{cases}
\frac{n}{\binom{n+j}{2}}, & \text{ if }i+1 < j \leq n, \\
\frac{n}{\binom{n+j}{2}}+\frac{ni}{\binom{n+i}{2}}, & \text{ if }i+1 = j \leq n.
\end{cases}\]
 \end{theorem}
We begin Section~\ref{sec:pre} by establishing key definitions, followed by a description of the bully path projection and the encoding of multiline queues as Young tableaux.  This sets up the combinatorial framework for analysing multiline queue configurations. These methods are first applied to the base case of continuous two-species TASEP in Section~\ref{sec:twospecies}, deriving initial correlation expressions that motivate the general case. Theorem~\ref{thm:i>j} is proved in Section~\ref{sec:i>j} using the techniques developed in Section~\ref{sec:pre}. In Section~\ref{sec:i<j}, we introduce preliminary lemmas for the case $i<j$, which lead to a complete proof of Theorem~\ref{thm:i<j} in Section~\ref{sec:progress}, thereby resolving the conjecture by Aas and Linusson. Finally, Section~\ref{sec:betaproof} offers an alternative approach to solving Theorem~\ref{thm:i<j} using first principles. 

\section{Preliminaries}\label{sec:pre} 

A multispecies TASEP  is a stochastic process on a graph. We first define multispecies TASEP on a ring before studying the continuous multispecies TASEP on a ring.

\subsection{Multispecies TASEP}\label{sec:mTASEP}
A multispecies TASEP is a continuous-time Markov process that can be defined on a ring with $L$ sites. For an integer tuple $\textbf{m}=(m_1, \ldots, m_n)$, a multispecies TASEP of type $\textbf{m}$ has $m_1+\cdots+m_n$ sites on the ring occupied with particles. Each particle is assigned a label from the set $[n]$, and there are exactly $m_{\ell}$ particles with the label $\ell$. The unoccupied sites are treated as particles with the highest label $n+1$. The dynamics of the process are as follows: Each particle carries an exponential clock that rings with rate $1$. When the clock rings, the particle tries to jump to the next site in the clockwise direction. Let this particle be labelled $i$. The jump is successful only if the following site has a label greater than $i$ or is unoccupied (label $n+1$). In that case, the two particles exchange positions. The states of the multispecies TASEP are words of length $L$ with the letter $\ell$ occurring $m_{\ell}$ times for all $\ell \in [n]$, and $n+1$ occurring $L-\sum_{\ell}m_{\ell}$ times.  %For simplicity, these are referred to as words of type $\textbf{m}$.

Now we define the \textit{multiline process}, which can be projected onto the multispecies TASEP through a process called \textit{lumping}. A multiline process is a Markov process defined on a graph that consists of a collection of disjoint cycle graphs, all of the same length. Each graph is represented as a \textit{row}, numbered from top to bottom. Each row contains the same number of sites, which may or may not be occupied by a particle. For an $n$-tuple $\textbf{m}=(m_1,\ldots, m_n)$ with $m_{\ell} \geq 0$ and $L\geq m_1+\cdots+m_n$, a \textit{multiline queue} of type $\textbf{m}$ is formed by stacking $n$ rows on top of each other, each with $L$ sites. In the $i^{th}$ row from the top, $C_i:=m_1+\cdots+ m_i$ of the sites are occupied. See Figure~\ref{fig:MLQ} for an example of a multiline queue.
\begin{figure}[ht]
   \centering
   \includegraphics[width=.6\textwidth]{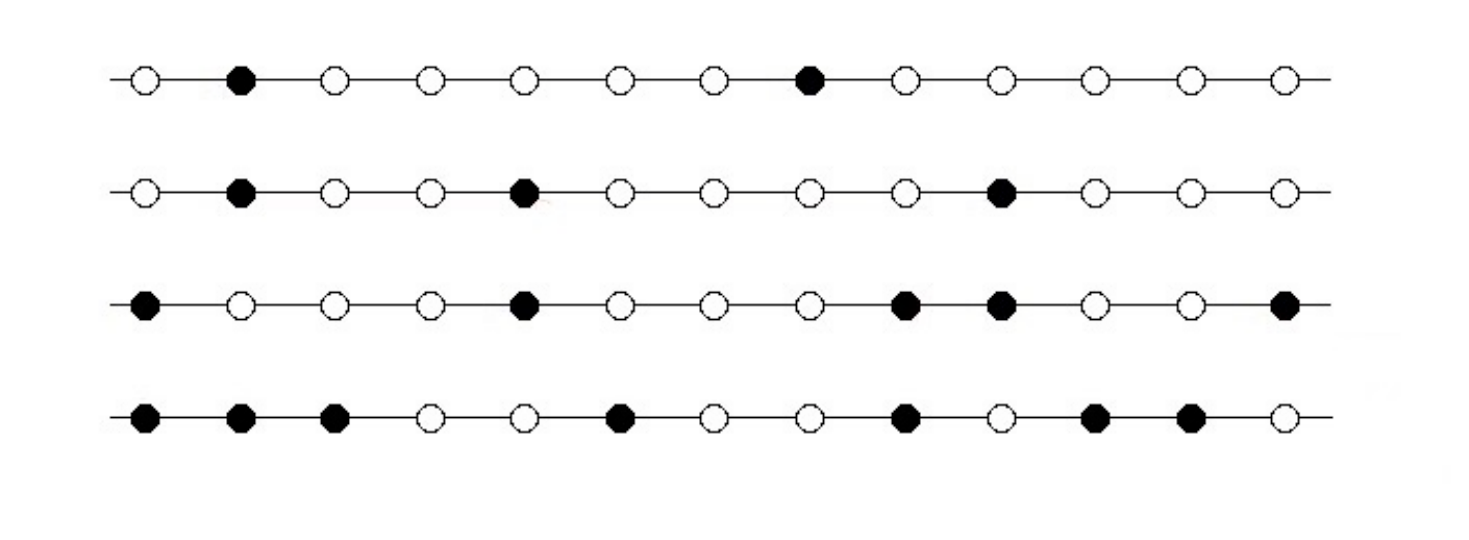}
    \caption{A multiline queue of type $(2,1,2,2)$ on $13$ sites.}
    \label{fig:MLQ}
\end{figure}

The dynamics of the multiline process are described in detail in~\cite{ferrarimartin} via transitions on multiline queues of a fixed type. 
%It has a unique stationary distribution, which is uniform (\cite[Theorem 3.1]{ferrarimartin}).
% \begin{theorem}\cite[Theorem 3.1]{ferrarimartin}
% The stationary distribution of the multiline process of type $\textbf{m}$ is uniform.
% \end{theorem}
A multiline queue of type $\textbf{m}$ can be projected to a word by an algorithm known as the \textit{bully path procedure}, which we define recursively as follows:
\begin{enumerate}
    \item \label{item:Step1} Let $Q$ be a multiline queue of type $\textbf{m}$. We construct \textit{bully paths} that contain exactly one particle from each row. Start with the first row in $Q$. Choose a particle in the first row, and trace its path down to the second row. The path then moves rightwards along the second row until it runs into another particle. If no particle is found by moving rightwards, the path wraps around the end of the second row and continues searching from the leftmost site of the second row until it finds one. This case is known as bullying via \textit{wrapping}. This process is repeated in the third row, where the path again moves downwards and then rightwards until another particle is encountered. This sequence continues all the way down to the last row. Each particle encountered by this bully path is labelled $``1"$. We similarly construct the bully paths starting from other particles in the first row, and labelling unlabelled particles hit by the path. It turns out that the order in which these paths are constructed, starting from the particles in the first row, does not matter for the final labelling. At the end of this step, we have a total of $m_1$ bully paths. That is, $m_1$ particles of the last row are labelled $``1"$. See Figure \ref{fig:bpe1} for the construction of bully paths to the multiline queue in Figure~\ref{fig:MLQ}. Here, the bully paths starting from each row are constructed in a left-to-right order.

\begin{figure}[ht]
   \centering
    \includegraphics[width=.6\textwidth]{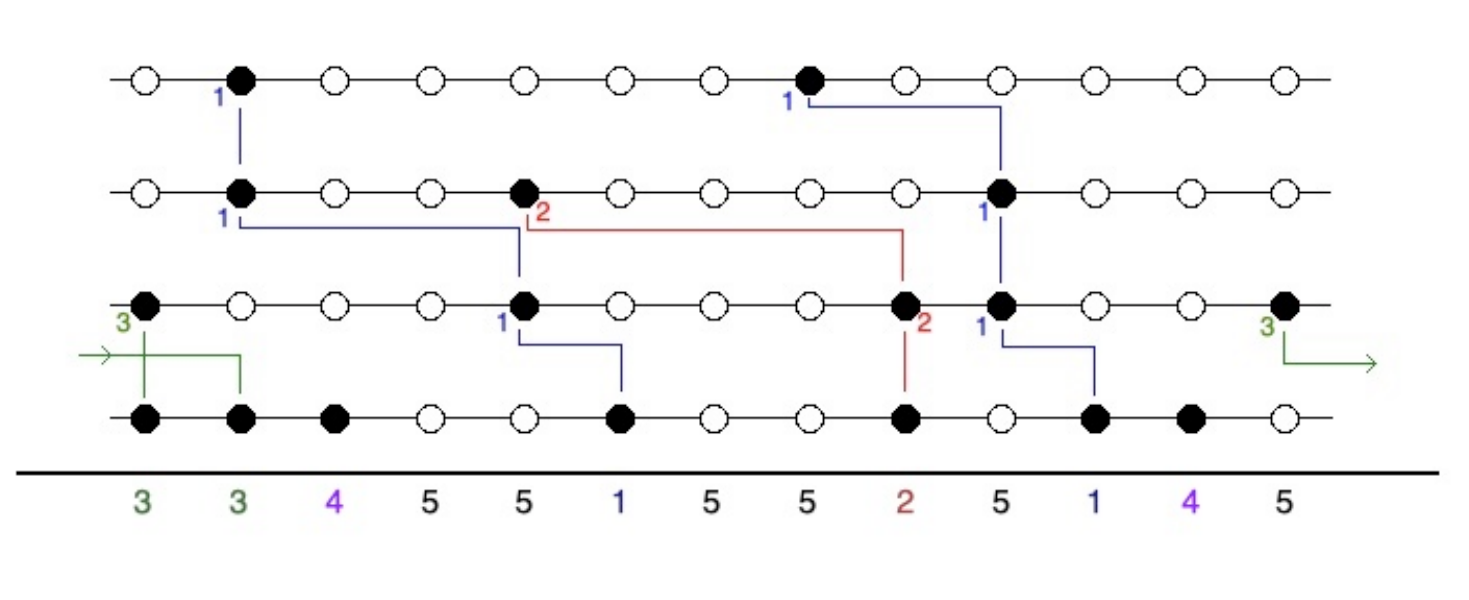}
    \caption{Bully path projection on the multiline queue from Figure~\ref{fig:MLQ}. The bully paths starting in the first, second, and third rows are shown in colors blue, red, and green, respectively.}
    \label{fig:bpe1}
\end{figure}
    
    \item Next, we construct bully paths starting with the unlabelled particles in the second row by repeating the same process from Step~(\ref{item:Step1}). We label all the particles in these paths at the end as $``2"$, and there are $m_2$ paths in total. We then repeat these steps for all the subsequent rows. Finally, we label all the particles that remain unlabelled in the last row as $``n"$ and all the unoccupied sites as $``n+1"$.
    
    \item Hence for each $\ell$, $m_{\ell}$ particles in the last row are labelled as $``\ell"$. Let $\omega$ denote the word formed by the labels in the last row. Then, $\omega$ is a configuration of the multispecies TASEP of type $\textbf{m}$. Let $\mathcal{B}$ denote this projection map. Then, $\omega$ is known as the \textit{projected word} of $Q$ and we write it as $\omega=\mathcal{B}(Q)$. The projected word in Figure ~\ref{fig:bpe1} is 3345515525145.
\end{enumerate}
The connection between the stationary distribution of the multiline process and that of the multispecies TASEP is established by the following theorem by Ferrari and Martin~\cite{ferrarimartin}. 

\begin{theorem}\cite[Theorem 4.1]{ferrarimartin}
\label{thm:fm2}
The process on the last row of the multiline process of type $\textbf{m}$ is the same as the multispecies TASEP of type $\textbf{m}$.
\end{theorem}

\subsection{Continuous multispecies TASEP}
 Fix $\textbf{m}=(m_1, \ldots, m_n)$ and define $C_i=m_1+\cdots+m_i$ for all $i \in [n]$. The continuous multispecies TASEP can be viewed as a formal limit of the stationary distribution of the multispecies TASEP  on a ring with $L$ sites. First, we consider a multispecies TASEP of type $\textbf{m}$ on a ring with $L$ sites. Let $\Pi_n^L$ denote the stationary distribution of this TASEP. As we let $L$ approach infinity while keeping the tuple $\textbf{m}$ constant (which implies that the number of unoccupied sites tends to infinity), the ring is scaled to the continuous interval $[0,1)$. The limit of the stationary distribution $\Pi_n^L$ then yields a distribution $\Pi_n$ of labelled particles on a continuous ring. It is important to note that $\Pi_n$ is not yet shown as the stationary distribution of any Markov process.

Similar to the multiline queues described in \cite{ferrarimartin}, we can define continuous multiline queues of a given type. For $\textbf{m}=(m_1, \ldots, m_n)$, let $C_i = m_1+\cdots +m_i$. A \textit{continuous multiline queue} of type $\textbf{m}$ consists of $n$ copies of the continuous ring $[0,1)$ stacked on top of each other, with $C_i$ particles in the $i^{th}$ row from the top.% and with the length of each row scaled to 1.

The location of each particle is considered to be a real number within the continuous interval $[0,1)$. In the distribution we will consider, the horizontal position of each particle will almost surely be distinct.
\begin{example}\label{exmp:MLQ}
See Figure \ref{fig:contMLQ} for an example of a continuous multiline queue of type $(1,3,1,2)$. The rows have $1,4,5$ and $7$ particles respectively. Note that there is no particle directly above or below any other particle. 
\end{example}
 
\begin{figure}[ht]
   \centering
    \includegraphics[width=.6\textwidth]{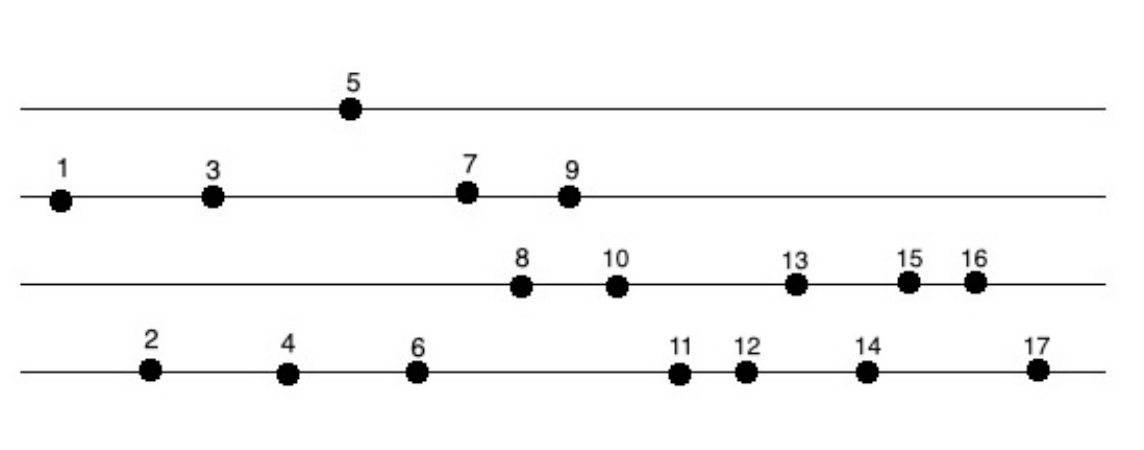}
    \caption{A continuous multiline queue of type $(1,3,1,2)$}
    \label{fig:contMLQ}
\end{figure}

Consider the labels of the particles in Figure~\ref{fig:contMLQ}. The labels are assigned in the order of the horizontal positions of the particles, moving from left to right. We now refer to an integer representation of a continuous multiline queue, which was also used by Aas and Linusson~\cite{aaslinusson}.

\begin{defn}
 Let $\textbf{m}=(m_1,\ldots,m_n)$ be an $n$-tuple, with $C_i=m_1 + \cdots + m_i$, and let $N=\sum_{i=1}^n C_i$. A \textit{placement} of a continuous multiline queue of type $\textbf{m}$ is represented as a triangular array $(Q_{i,j})$ with distinct integers from the set $[N]$ such that the integer $Q_{i,j}$ denotes the relative horizontal position of the $j^{th}$ particle in the $i^{th}$ row of the continuous multiline queue as seen from left to right. 
 \end{defn}
 \begin{remark}
For our purposes, it suffices to know the relative positions of particles within the rows. Therefore, a continuous multiline queue will be represented by its placement, and we will use these two terms interchangeably. The number of different placements of type $\textbf{m}$ is given by $\binom{N}{C_1\,,\ldots,\,C_n}$.
\end{remark}
 \begin{example}\label{exmp:placement}
 The placement of the continuous multiline queue in Figure~\ref{fig:contMLQ} is given by
\[Q=\begin{array}{ccccccc}
     5 & &&&&&\\
     1 & 3 & 7 & 9 \\
     8 & 10 & 13 & 15 & 16 \\
     2 & 4 & 6 & 11 & 12 & 14 & 17.
\end{array}
\]

Each row of the array is formed by arranging the order of the particles from the corresponding row of the continuous multiline queue in ascending order based on their horizontal positions.
\end{example}

The \textit{bully path projection} is a map from the set of continuous multiline queues of type $\textbf{m}$ to words of type $\textbf{m}$. Given a continuous multiline queue $Q$, the algorithm recursively maps $Q$ to a word $\omega$ as follows:

\begin{enumerate}
    \item\label{item:cbp1} Consider the placement of $Q$. Choose an integer $k_1$ in the first row. Look for the smallest element larger than $k_1$ in the second row and mark it as $k_2$, if it exists. If $k_1$ is larger than all the available integers in the second row, mark the smallest available integer in the second row as $k_2$. This is known as \textit{wrapping} from the first row to the second row. We say that $k_1$ ``bullies" $k_2$ and write it as $k_1 \rightarrow k_2$; or $k_1 \xrightarrow{W} k_2$ in the case of wrapping.

    \item\label{item:cbp2} Mark an integer in the third row as $k_3$ by finding the smallest integer larger than $k_2$. If no such integer exists, wrap around and label the smallest integer as $k_3$. Continue this process iteratively. The sequence $k_1,k_2, \ldots,k_n$ thus obtained is called a \textit{bully path} starting at $k_1$, and these integers are now unavailable for further bullying. Similarly, construct the bully paths starting from other integers in the first row, and labelling available integers hit by the path. Label the endpoints of all such paths as $1$. There are $m_1$ such paths, called \textit{type $1$} bully paths. For example, $5\rightarrow 7\rightarrow 8 \rightarrow 11$ is a type $1$ bully path in Figure \ref{fig:bpe}. The order of constructing the bully paths, starting from the first row, does not affect the final labelling.
    
    \item Next, construct bully paths starting with the available integers in the second row by following steps~(\ref{item:cbp1}) and (\ref{item:cbp2}). Label the ends of all such paths with $2$, resulting in $m_2$ bully paths of type $2$. Repeat these steps sequentially for all the other rows. The bully paths of type $n$ are just the integers in the last row that remain after the construction of all type $(n-1)$ bully paths. 
    \begin{figure}[ht]
    \centering
   \includegraphics[width=.6\textwidth]{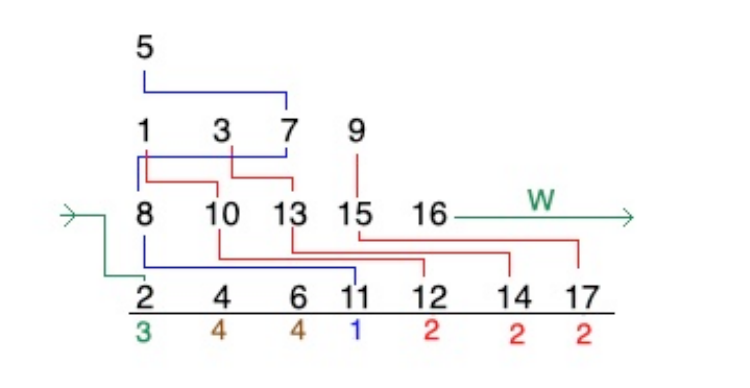}
    \caption{Bully path procedure on a multiline queue of type (1,3,1,2). The types $1,2$ and $3$ bully paths are shown in colour blue, red, and green, respectively.}
    \label{fig:bpe}
    \end{figure}
    \item Therefore, there are  $m_\ell$ bully paths of type $\ell$ for each $\ell$. Let $\omega$ denote the word formed by the labels in the last row. Then, $\omega$ is a configuration for the continuous multispecies TASEP of type $\textbf{m}$. Let $\mathcal{B}$ denote this projection map. Then, $\omega = \mathcal{B}(Q)$ is known as the \textit{projected word} of $Q$. The projected word for the continuous multiline queue in Example~\ref{exmp:placement} is 3441222; see Figure~\ref{fig:bpe}. In this example, the bully paths originating from each row are constructed in increasing order. 
\end{enumerate}

The distribution of words of type $\textbf{m}$ is the same as the distribution of the last row of continuous multiline queues of type $\textbf{m}$, which can be obtained by taking the limit of the distribution of the last row for discrete multiline queues. 

By Theorem~\ref{thm:fm2}, $\Pi_n^L$ denotes the distribution of the last row of the discrete multiline process of type $\textbf{m}$ with $L$ sites in each row. Therefore, as shown in \cite{aaslinusson}, the continuous process on the last row is obtained by taking the limit as $L \rightarrow \infty$. Consequently, $\Pi_n$ gives the distribution of the labels in the last row for a uniformly chosen continuous multiline queue. Thus, to study the correlations of two adjacent particles with labels $i$ and $j$ in the continuous multispecies TASEP, it is enough to count the number of placements that project to the words with $i$ and $j$ as adjacent particles. Next, we define an operator on the space of all continuous multiline queues of a fixed type.
 
\begin{defn}
  Let $Q$ be a continuous multiline queue of type $\textbf{m}$. Define an operator  $\mathcal{S}$ such that if $Q' = \mathcal{S}(Q)$, then $Q'$ is obtained from $Q$ by adding $1$ modulo $N$ to each element of $Q$, and rearranging any row in increasing order if necessary. We refer to $\mathcal{S}$ the \textit{shift operator} and to $Q'$ as the \textit{shifted} continuous multiline queue.
  \begin{example}\label{exmp:shift}
 Let $Q$ be the continuous multiline queue from Example \ref{exmp:placement}, then $Q'$ is given by 
 \[Q'=\begin{array}{ccccccc}
     6 & &&&&&\\
     2 & 4 & 8 & 10 \\
     9 & 11 & 14 & 16 & 17 \\
     1 & 3 & 5 & 7 & 12 & 13 & 15.
\end{array}
\]
\end{example}
\end{defn}
The following lemma establishes the relationship between the projected word of a multiline queue and that of its shifted continuous multiline queue.

\begin{lemma}\label{lem:shift}
Let $Q$ be a continuous multiline queue of type $\textbf{m}$ with $N=\sum_{i=1}^n C_i$ particles. Shifting $Q$ rotates the projected word by one position to the right when the largest element $N$ is in the last row of the placement, while preserving the word otherwise. Specifically, if $Q'= \mathcal{S}(Q)$, $\omega=\mathcal{B}(Q)$, and $\omega'=\mathcal{B}(Q')$, then $\omega$ and $\omega'$ are related as follows:
\begin{enumerate}
\item if $N$ is not in the last row, then $\omega' = \omega$.
\item if $N$ is in the last row, then $\omega'$ is obtained by rotating $\omega$ one position to the right.
\end{enumerate}
\end{lemma}
\begin{proof}
Let $N$ be in the $r^{th}$ row of the placement of $Q$. First, let $r<n$. $Q'$ is obtained from $Q$ by adding $1$ to every integer less than $N$ and by replacing $N$ with $1$. By the increasing property of the rows, the $r^{th}$ row is now rotated by one position to the right. If $N$ lies on a bully path that starts in some row above $r$ in $Q$, then all the bully paths remain the same. This is true because $N$, the largest integer in $Q$, wraps around and bullies the first element available to it in $(r+1)^{st}$ row. In $Q'$, $1$ being the smallest integer bullies the first available element, which is exactly the translation of the element bullied by $N$ in $Q$. The remaining bully paths are the same since the inequalities among all other elements do not change. 
On the other hand, if a type $r$ bully path in $Q$ begins at $N$ such that if $s_1 < s_2 < \cdots < s_{m_r}$ are all the elements of the $r+1^{st}$ row, that are hit by type $r$ bully paths in $Q$, then the elements hit by the type $r$ bully paths in $Q'$ will be $s_1+1 < s_2+1 < \cdots < s_{m_r}+1$. The specific bullying relations, however, depend on the order in which the type $r$ paths are constructed. Consequently, the projected word remains the same.

Finally, when $r=n$, that is, when $N$ is in the last row, adding $1$ modulo $N$ to each integer results in the last row rotating one position to the right. The bully paths remain unchanged; thus, the projected word derived from the labels of the particles in the last row is also rotated one position to the right.
\end{proof}

Let $\iden=(1,\ldots,1)$ be an $n$-tuple. Let $\hat{Q}$ be a continuous multiline queue of type $\iden$ and $\eta=\mathcal{B}(\hat{Q})$ be the projected word of $\hat{Q}$. Define $c_{i,j}(n)=\mathbb{P}\{\eta_a=i, \eta_{a+1}=j; a \in [n]\}$, where $a+1$ is defined modulo $n$. 
To make the analysis of the continuous multispecies TASEP of type $\iden$ easy, we use a classical property known as the \textit{projection principle}, which states that particles of two consecutive types cannot be distinguished by particles of other types. This is a key observation in \cite[Section 1]{Intro_AAMP}. Thus,
identifying two consecutive labels $k$ and $k+1$ defines a natural lumping from the multispecies TASEP with $n$ species onto the multispecies TASEP with $(n-1)$ species. Therefore, to obtain $c_{i,j}(n)$ for $i>j$, it is enough to find the probability that $4$ is followed by a $2$ in the projected word of a random five-species continuous multiline queue with type $\textbf{m}_{i,j}=(j-1,1,i-j-1,1,n-i).$ 
  
We can further simplify the task by lumping many continuous multiline queues onto a three-species system. Given a continuous multispecies TASEP with type $\iden$, consider its lumping to the continuous multispecies TASEP of type $\textbf{m}_{s,t}=(s,t,n-s-t)$ where $s+t>j>s$ and $i>s+t$. Thus, a particle with label $j$ becomes a $2$, and that with label $i$ becomes a $3$ whenever $t,n-s-t>0$. To compute the correlation $c_{i,j}(n)$ for $i>j$, we need to look at the correlation of $3$ followed by $2$ in the projected words of continuous multiline queues of type $\textbf{m}_{s,t}$. Similarly, to formulate $c_{i,j}(n)$ for $i<j$, we need to look at the correlation of $2$ followed by $3$ in the projected words of continuous multiline queues with type $\textbf{m}_{s,t}$.

 For $1 \leq i,j \leq n$, and $a \in [n]$, let $E^a_{i,j}(n)=\mathbb{P}\{\eta_a=i, \eta_{a+1}=j \}$ where $a+1$ is defined modulo $n$. Thus,
\begin{equation}\label{cor}
c_{i,j}=\sum_{a=1}^{n} E^a_{i,j}.
\end{equation}
Consider a continuous multiline queue $Q$ of type $\textbf{m}_{s,t}$. Let the projected word of $Q$ be $\omega$. We define
\begin{align}
\Theta_a^<(s,t)=   \mathbb{P}\{\omega_a=2, \omega_{a+1}=3\}, \\
\text{and }\Theta_a^>(s,t)=   \mathbb{P}\{\omega_a=3, \omega_{a+1}=2\}     
\end{align}
 for $a \in [n]$. By the projection principle, we have 
\begin{align}
    \Theta_a^<(s,t) &=\sum_{j=s+t+1}^{n} \sum_{i=s+1}^{s+t}E^a_{i,j},\\
    \text { and }\Theta_a^>(s,t) &=
    \sum_{i=s+t+1}^{n} \sum_{j=s+1}^{s+t}  E^a_{i,j}.
\end{align}
Let $\mathscr{T}^<$ (respectively $\mathscr{T}^>$) denote that probability that $2$ and $3$ are adjacent. Then, it can be represented as the sum of $\Theta_a^<$ (respectively $\Theta_a^>$) over the index $a$. We have,
\begin{align}\label{tau1}
    \mathscr{T}^<(s,t) &  = \sum_{a=1}^{n} \sum_{j=s+t+1}^{n} \sum_{i=s+1}^{s+t} E^a_{i,j}(n)& \notag \\
      & =  \sum_{j=s+t+1}^{n} \sum_{i=s+1}^{s+t} c_{i,j}(n).
\end{align}
Equation \eqref{tau1} follows from \eqref{cor}. Similarly,
\begin{align}
    \mathscr{T}^>(s,t)& =  \sum_{i=s+t+1}^{n} \sum_{j=s+1}^{s+t} c_{i,j}(n).
\end{align}
 For $i<j$, the principle of inclusion-exclusion then gives us 
\begin{equation}\label{PIE1}
    c_{i,j}(n)= \mathscr{T}^<(i-1,j-i)-\mathscr{T}^<(i,j-i-1)-\mathscr{T}^<(i-1,j-i+1)+\mathscr{T}^<(i,j-i),
\end{equation}
and for $i>j$, we have
\begin{equation}\label{PIE2}
    c_{i,j}(n)= \mathscr{T}^>(j-1,i-j)-\mathscr{T}^>(j,i-j-1)-\mathscr{T}^>(j-1,i-j+1)+\mathscr{T}^>(j,i-j).
\end{equation}
%A similar equation exists for the case $i>j$ which relates $c_{i,j}$ and $\mathscr{T}^>$ by replacing $i$ with $j$ in \eqref{PIE1} and vice-versa
For $a \in [n]$, we let
\begin{align}\label{Tst<}
T_a^<(s,t)&=\mathbb{P}\{\omega_a=2, \omega_{a+1}=3, Q_{3,n}=N\},\\
\text{ and } T_a^>(s,t)&=\mathbb{P}\{\omega_a=3, \omega_{a+1}=2, Q_{3,n}=N\},\label{Tst>}
\end{align}
where $N=n+2s+t$ is the number of integers in the placement of a continuous multiline queue of type $\textbf{m}_{s,t}$. Then, the following lemma holds.
\begin{lemma}\label{lem:rot} $\mathscr{T}^<(s,t)=(n+2s+t)\,T_a^<(s,t)$ for any $a \in [n]$.
\end{lemma}
\begin{proof}
Let $Q$ be a continuous multiline queue of type $\textbf{m}_{s,t}$. Applying the shift operator $\mathcal{S}$ sequentially $N=n+2s+t$ times to $Q$ generates all the rotations of the projected word. By Lemma~\ref{lem:shift}, in exactly $n$ out of these $N$ shifts, the projected word rotates one unit to the right, and in the remaining shifts, the projected word remains the same. For a fixed $a \in [n]$, any continuous multiline queue that contributes to $\mathscr{T}^<(s,t)$ can be obtained as a rotation of a continuous multiline queue for which
\begin{enumerate}[(i)]
    \item the projected word has $2$ and $3$ in positions $a$ and $a+1\mod n$ respectively, and
    \item $N$  is in the last row.
\end{enumerate}
Note that if a word has a $2$ followed by a $3$ in $p$ separate positions, it occurs as a rotation of $p$ different words with $\omega_a=2, \omega_{a+1}=3$. Hence, the result.   
\end{proof}
We also have $\mathscr{T}^>(s,t)=(n+2s+t)T_a^>(s,t)$ for any $a \in [n]$. Further, Lemma~\ref{lem:rot} holds for $a=1$ in particular. Henceforth, we will write $T_1^<$ (respectively $T_1^>$) as $T^<$ (respectively $T^>$) for simplicity.  Then, for $i<j$, \eqref{PIE1} gives 
 \begin{align}\label{PIE3}
   c_{i,j}(n) =& (n+i+j-2)\,T^<(i-1,j-i)-(n+i+j-1)\,T^<({i,j-i-1}) \notag\\
   &-(n+i+j-1)\,T^<({i-1,j-i+1}) +(n+i+j)\,T^<({i,j-i}),
\end{align}
and for $i>j$, \eqref{PIE2} gives 
\begin{align}
\label{PIE4}
   c_{i,j}(n) =& (n+i+j-2)\,T^>({j-1,i-j})-(n+i+j-1)\,T^>({j,i-j-1})\notag \\
   &-(n+i+j-1)\,T^>({j-1,i-j+1}) +(n+i+j)\,T^>({j,i-j})
\end{align}
respectively.
%\begin{remark}\label{rem:case}
%Note that~\eqref{PIE2} and %Lemma~\ref{lem:rot} still hold if $3$ %and $2$ were interchanged in the %projected word and the quantities are %defined according to the case $i<j$. 
%\end{remark}

\subsection{Standard Young tableaux}\label{sec:syt}
 Consider a partition $\lambda$= $(\lambda_1,\lambda_2,\ldots, \lambda_\ell)$ of a positive integer $N$, where $\lambda_1 \geq \lambda_2 \geq \cdots  \geq \lambda_\ell > 0$ and $N=\lambda_1+\lambda_2 + \cdots +  \lambda_\ell$. We use the notation $\lambda \vdash N$ or $N=|\lambda|$ to indicate that $\lambda$ is a partition of $N$. The \textit{Young diagram} of a \textit{shape} (or partition) $\lambda$ is an arrangement of boxes in $\ell$ left-aligned rows, with $\lambda_i$ boxes in the $i^{th}$ row. The \textit{hook} of a box $x$ in the Young diagram is defined as the set of boxes that are directly to the right or directly below $x$, including $x$ itself. The \textit{hook length} of a box $x$, denoted by $h_x$, is the number of boxes in its hook. Below is an example of a Young diagram of shape $(5,3,1)$ with the hook lengths for each box indicated within the box.

\begin{center}
    \young(75421,421,1)
\end{center}
 A \textit{standard Young tableau} or an \textit{SYT} of shape $\lambda$ is a filling of a Young diagram of shape $\lambda$ with the numbers $1,2,\ldots, N$, where $N = |\lambda|$, such that the numbers increase strictly down a column and across a row. The following is an example of a standard Young tableau of shape $(5,3,1)$. 
\begin{center}
   \young(12569,378,4)
\end{center}
The number of standard Young tableaux of a given shape $\lambda$ can be counted using the following result, which is known as the \textit{hook-length formula}.

\begin{theorem}\cite{frame1954hook}\label{thm:hlf} Let $\lambda \vdash n$ be an integer partition. The number of standard Young tableaux of a shape $\lambda$ is given by
\begin{equation}\label{hlf}
 f_\lambda=\frac{N!}{\Pi_{x \in \lambda} h_x},  \end{equation}
 where the product is over all the boxes in the Young diagram of $\lambda$ and $h_x$ is the hook length of a box $x$.
\end{theorem}

 \begin{example}
For $\lambda=(a,b)$, the number of standard Young tableaux of shape $\lambda$ is given by 
     \begin{equation}\label{2tab}
        f_{(a,b)}=
         %\frac{(a+b)!}{(1\cdot2\cdots b)(1\cdot2\cdots a-b\cdot a-b+2\cdots a+1)}=
         \frac{(a+b)!(a-b+1)}{(a+1)!b!}=\frac{a-b+1}{a+1}\binom{a+b}{a}.
\end{equation}
Similarly, for $\mu=(a,b,c)$, we have 
\begin{align}\label{3tab}
f_{(a,b,c)}&=\frac{(a+b+c)!(a-c+2)(a-b+1)(b-c+1)}{(a+2)!(b+1)!c!} \notag \\
&=\frac{(a-c+2)(a-b+1)(b-c+1)}{(a+2)(a+1)(b+1)}\binom{a+b+c}{a,b,c}.
\end{align}
\end{example}
\begin{remark}
    \label{rem:hookrec}
    It is straightforward to verify from \eqref{2tab}, that $f_{(a,b)}$ satisfies an interesting recurrence relation given by
    \begin{equation*}
        f_{(a,b)} = f_{(a-1,b)} +f_{(a,b-1)}.
    \end{equation*}
\end{remark}
\begin{lemma}\label{lem:syt}
    Let $\textbf{m}=(m_1,\ldots,m_n)$, and let $C_i=m_1+\cdots +m_i$. The set of continuous multiline queues of type $\textbf{m}$ that have no wrapping under the bully path projection is in bijection with standard Young tableaux of shape $\lambda$ where $\lambda_i=C_{n-i+1}=m_1+\cdots+m_{n-i+1}$.
    \end{lemma}
    
    \begin{proof}
        Consider $Q$, a continuous multiline queue of type $\textbf{m}$. The largest integer in $Q$ is given by the sum $N=C_1+\cdots+C_n$. For $1 \leq i \leq n, 1 \leq j \leq C_i$, let $Q_{i,j}$ be distinct integers from $1$ to $N$. Then, we have 
    \[Q=\begin{array}{cccccccc}
     Q_{1,1}  &\ldots & Q_{1,C_1}\\
     Q_{2,1} & \ldots &\ldots & Q_{2,C_2}\\
     \vdots\\
     Q_{n,1} & \ldots & \ldots & \ldots &Q_{n,C_n}.
     \end{array}\]

     If we assume that there is no wrapping from the first row to the second row in $Q$, then it follows that $Q_{1,C_1-j}<Q_{2,C_2-j}$ for all $0 \leq j \leq C_1-1$. Similarly, since there is no wrapping from the second row to the third row in $Q$, $Q_{2,C_2-j}<Q_{3,C_3-j}$ for all $0 \leq j \leq C_2-1$, and so on. This, along with the increasing property of the rows, gives us

\[
\begin{array}{ccccccccccccc}
     & & & & Q_{1,1}& < &\cdots & < & Q_{1,C_1-j} & < & \cdots & < & Q_{1,C_1}\\
     & & & & \wedge & & & & \wedge & & \cdots & & \wedge\\
     & &Q_{2,1} & < & \cdots & < &  \cdots & < & Q_{2,C_2-j} & < &\cdots & < & Q_{2,C_2}\\
     & & \wedge & &  &  &  & & \wedge & &  & & \wedge\\
     &&&&&&&&&&&&\vdots\\
     Q_{n,1} & < & \cdots& < & \cdots & < & \cdots & < &Q_{n,C_n-j} & <& \cdots & < & Q_{n,C_n}.
\end{array}\]
 Hence, $Q$ is in bijection with a standard Young tableau $\mathscr{Y}$ of shape $(C_n,\ldots,C_1)$ and the bijection is given by $\mathscr{Y}_{i,j}=N-Q_{n-i+1,C_{(n-i+1)}-j+1}+1$.
 Thus, the number of continuous multiline queues of type $\textbf{m}$ with no wrapping is given by $f_{(C_n,\ldots,C_1)}$. 
\end{proof}

\begin{defn}
    Given two partitions $\lambda,\mu$ such that $\mu \subseteq \lambda$ (containment order, that is, $\mu_i \leq \lambda_i$ for all $i$), the \textit{skew shape} $\lambda/\mu$ is a Young diagram that is obtained by subtracting the Young diagram of shape $\mu$ from that of $\lambda$. See Figure~\ref{fig:skewshape}.
\end{defn}

\begin{figure}[ht]
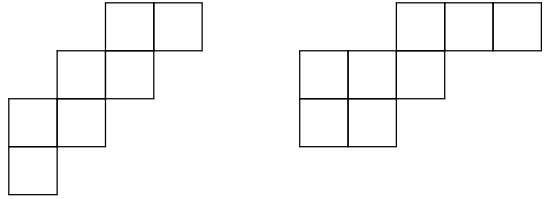

    \centering
    \ydiagram{2+2,1+2,2,1} \hspace{0.5in}\ydiagram{2+3,3,2}
    \caption{Skew shapes $(4,3,2,1)/(2,1)$ and $(5,3,2)/(2)$ respectively}
    \label{fig:skewshape}
\end{figure}

\begin{defn}
    A \textit{standard Young tableau of a skew shape} $\lambda/\mu$ is a filling of the skew shape by positive integers that are strictly increasing in rows and columns.
\end{defn}

\begin{example} Following are a few of the many standard Young tableaux of the skew shape $(4,3,2,1)/(2,1)$.
\begin{center}
\ytableausetup{notabloids}
\begin{ytableau}
\none & \none & 1 & 4 \\
\none & 2 & 3 \\
5 & 6 \\
7
\end{ytableau}
\hspace{0.5in}
\ytableausetup{notabloids}
\begin{ytableau}
\none & \none & 1 & 2 \\
\none & 5 & 7 \\
3 & 6 \\
4
\end{ytableau}
\hspace{0.5in}
\ytableausetup{notabloids}
\begin{ytableau}
\none & \none & 2 & 3 \\
\none & 1 & 7 \\
4 & 6 \\
5
\end{ytableau}
\end{center}
\end{example}

Let $f_{(\lambda/\mu)}$ denote the number of SYTs of a skew shape $\lambda/\mu$. This number is counted by the following formula. 
\begin{theorem}\cite[Corollary 7.16.3]{stanley2}
Let $|\lambda/\mu| = n $ be the number of boxes in the skew shape $\lambda/\mu$ that has $\ell$ parts. Then,
\begin{equation}\label{frob}
    f_{(\lambda/\mu)} = n! \text{det}\Big( \frac{1}{(\lambda_i-\mu_j-i+j)!}\Big) _{i,j=1}^\ell.
\end{equation}
\end{theorem}
Note that we take $0!=1$ and $k!=0$ for any $k<0$ as a convention. 

\begin{example}
    Let $\lambda/ \mu = (6,4)/(3)$. Then, $n=|\lambda/\mu| = 7$ and $\ell =2$. We have,
    
\[f_{(\lambda/\mu)} = 7! {\begin{vmatrix}
 \frac{1}{(6-3-1+1)!} & \frac{1}{(6-0-1+2)!}\\
 \frac{1}{(4-3-2+1)!} & \frac{1}{(4-0-2+2)!}
 \end{vmatrix}} = 34.\]
\end{example}

\subsection{The continuous two-species  TASEP}\label{sec:twospecies}
 In this section, we study the continuous TASEP with two species of particles. Due to the simplicity of its structure, the stationary distribution of the continuous two-species TASEP can be calculated completely. This analysis lets us demonstrate the technique of encoding the placement of a continuous multiline queue as a standard Young tableau for a simpler case. We use similar encodings frequently in later sections. 
 %In addition, the derivation in  \eqref{del22} helps us in Section~\ref{sec:progress} to prove one of the main theorems in the paper. 
 
  %Let the correlation $c_{i,j}(n)$ be defined as the probability that a particle labelled $i$ is immediately followed by a particle labelled $j$ on the ring in the limiting distribution. 
 
 We use continuous multiline queues of type $(s,t)$ to find certain two-point correlations in continuous two-species TASEP. Let $\Omega_{s,t}$ denote the set of continuous multiline queues of type $(s,t)$, where each $Q \in \Omega_{s,t}$ consists of $s$ integers in the first row and $s+t$ integers in the second.
%Let $\pi=\mathcal{B}(M)$ be the projected word of $M$. and let $\epsilon_{c,d}(s,n)$ be the probability $\mathbb{P}\{ \pi_a=c, \pi_{a+1}=d; a \in [n] \}$. Consider an $M' \in \Omega_{s,t}$ with the following configuration:
For $i \in \{1,2\}$, let $\delta_i(s,t)$ be the number of continuous multiline queues in $\Omega_{s,t}$ with the largest integer $2s+t$ in the second row, such that the projected word has $i$ in the first position. 

The multispecies TASEP satisfies rotational symmetry (see~\cite[Proposition 2.1]{ayyerlinusson}). By Lemma~\ref{lem:shift}, the stationary probability of a word $\omega$ is the same as that of the word $\omega'$, which is obtained by rotating $\omega$ one position to the right. Consequently, any particle occupying the first position when the largest integer is in the second row has a uniform distribution. Given that there are $s$ occurrences of label $1$ and $t$ occurrences of label $2$, the stationary probability that the first site is occupied by $1$ (respectively, $2$) is given by $\frac{s}{s+t}$ (respectively $\frac{t}{s+t}$). Furthermore, there are $\binom{2s+t-1}{s}$ continuous multiline queues in $\Omega_{s,t}$ with the integer $2s+t$ occurring in the second row. We have
\begin{equation}\label{del1}
\delta_1(s,t) = \frac{s}{s+t}\binom{2s+t-1}{s},
\end{equation}
\begin{equation}\label{del2}
\delta_2(s,t) = \frac{t}{s+t}\binom{2s+t-1}{s}.
\end{equation}
\begin{remark}\label{rem:rotsym}
 Note that the number of placements in $\Omega_{s,t}$ with $2s+t$ in the second row such that the projected word has $i$ in the $k^{th}$ position is the same for any $k \in [s+t]$ due to rotational symmetry. 
 \end{remark}

Similarly, let $\delta_{i,j}(s,t)$ count the number of continuous multiline queues of type $(s,t)$ that have the largest integer $2s+t$ in the second row such that the projected word has $i$ in the first position and $j$ in the second position. Based on the definitions of these quantities and using Remark~\ref{rem:rotsym}, we have the following system of independent equations for fixed $s$ and $t$:
\begin{align}
    \delta_{1,1} + \delta_{1,2}= \delta_{1},\label{del1dot}\\
    \delta_{1,1} + \delta_{2,1}= \delta_{1},\label{deldot1}\\
    \delta_{1,2} + \delta_{2,2}= \delta_{2}.\label{del2dot}
\end{align}
Therefore, finding $\delta_{i,j}$ for any $i,j \in \{1,2\}$ solves the system of equations. In particular, let $i=j=2$. Consider an arbitrary continuous multiline queue $Q$ with the following configuration such that $\pi = \mathcal{B}(Q)$.
\[
\begin{array}{cccccccc}
     &a_1 & a_2 & \ldots & a_s\\
    &b_1 & b_2 & \ldots &\ldots & b_{s+t-1} & 2s+t\\
     \hline
     \pi=&2 & 2 & \ldots & \ldots & \ldots & \ldots
\end{array}.\]

Since $\pi_1=\pi_2=2$, neither $b_1$ nor $b_2$ are bullied by any $a_k$ in $Q$. Therefore, $a_1> b_2$. By the increasing property of the rows, we also have $b_2>b_1$, which forces $b_1=1$ and $b_2=2$.
Furthermore, there is no wrapping from the first row to the second row, since $\pi_1 \neq 1$. This implies that the integers in $Q$ greater than $b_1$ and $b_2$ satisfy the following inequalities:  
\[\begin{array}{ccccccccccccc}
      & & & & a_{1} &  < & \cdots & < & a_{s-1} & < &a_s\\
      & & & & \wedge& &  \cdots & & \wedge & & \wedge\\
     b_{3} & < & \cdots & < &  b_{t+1}  & < &\cdots & < & b_{s+t-1} & < &2s+t.
\end{array}\]
Such configurations are in bijection with standard Young tableaux of shape $(s+t-2,s)$ by Lemma~\ref{lem:syt}. Therefore, using Equation \eqref{2tab}, we get 
\begin{equation}\label{del22}
\delta_{2,2}(s,t)=f_{(s+t-2,s)}=\frac{t-1}{2s+t-1}\binom{2s+t-1}{s} .
\end{equation}
Using \eqref{del1}-\eqref{del22}, we can solve for all the remaining $\delta_{i,j}$ as follows:
\begin{align}
    \delta_{1,2}(s,t) & = \delta_{2,1}(s,t) = \frac{t+1}{2s+t-1}\binom{2s+t-1}{s-1},\\
    & \delta_{1,1}(s,t) = 2\binom{2s+t-2}{s-2}.
\end{align}

\section[Correlations for the case i > j]{Correlations for the case $i>j$}
\label{sec:i>j}

In this section, we prove Theorem~\ref{thm:i>j} using the tools developed in Section~\ref{sec:pre}. 
Let $\textbf{m}_{s,t}=(s,t,n-s-t)$ and let $\mathscr{S}_{s,t}$ be the set of all continuous multiline queues $Q$ of type $\textbf{m}_{s,t}$ that satisfy $\omega_1=3$, $\omega_2=2$, and $Q_{3,n}=N=n+2s+t$, where $\omega= \mathcal{B}(Q)$. The set $\mathscr{S}_{s,t}$ consists of the continuous multiline queues which have the following configuration, where  $a_i,b_i$ and $c_i$ are distinct integers from the set $[N]$.
\[
\begin{array}{cccccccc}
     &a_1 & a_2 & \ldots &\ldots & a_s\\
     &b_1 & b_2 & \ldots & b_k &\ldots & b_{s+t}\\
     &c_1 & c_2 & \ldots & \ldots & \ldots & \ldots &c_n=N.\\
     \hline
     \omega=&3 & 2 & \ldots & \ldots & \ldots & \ldots & \ldots
\end{array}\]
\begin{lemma}\label{lem:cond}
Let $Q$ be a continuous multiline queue of type $\textbf{m}_{s,t}$. Then, $Q \in \mathscr{S}_{s,t}$ if and only if the following conditions hold:
\begin{enumerate}
    \item \label{item:1} If $a_1 \rightarrow b_k$, then $b_k>c_2$,
    \item \label{item:2} $c_1=1$ and $b_1=2$,
    \item \label{item:3} there is no wrapping from either of the two rows.
\end{enumerate} 
\end{lemma}
\begin{proof}
Let $Q \in \mathscr{S}_{s,t}$ and suppose that $a_1 \rightarrow b_k$. If $b_k<c_2$, then $b_k$ bullies either $c_1$ or $c_2$, and we get $\omega_1=1$ or $\omega_2=1$. Hence, condition (\ref{item:1}) holds.
If there is any wrapping from the second row to the third row, then we have $\omega_1=1$ or $2$, which is a contradiction. 

Furthermore, $\omega_2=2$ implies that $b_1 \rightarrow c_2$ and  $b_1$ is not bullied by any $a_i$. Therefore, $c_1<b_1<c_2$ and $b_1<a_1$, and hence $c_1=1$ and $b_1=2$. thus, condition (\ref{item:2}) holds.

If there is any wrapping from the first row to the second row, then we have $a_i \rightarrow b_1$ for some $i$, which is a contradiction, thus proving the remaining half of the condition (\ref{item:3}). It is straightforward to verify that the three conditions imply $\omega_1=3$, $\omega_2=2$, and $c_n=N$.
\end{proof}

From \eqref{Tst>}, recall that $T_1^>(s,t)$ = $\mathbb{P}\{\omega_1=3, \omega_2=2, Q_{3,n}=N\}$. We first compute $T^>(s,t)=T_1^>(s,t)$ and then substitute it into \eqref{PIE4} to solve for $c_{i,j}(n)$ for the case $i>j$. 
Let $\tau_{s,t}$ denote the cardinality of the set $\mathscr{S}_{s,t}$. Then, 
\begin{equation}\label{tauT}
\tau_{s,t}=\binom{n+2s+t}{n,s+t,s}T^>(s,t).
\end{equation}
 \begin{proof}
 [Proof of Theorem~\ref{thm:i>j}]
Let $Q \in \mathscr{S}_{s,t}$. We have $c_1=1$, $b_1=2$, and $c_n=N$ and we have to find the number of ways the remaining elements of $Q$ can be assigned values from the set $\{3, \ldots, N-1\}$ complying with the three conditions of Lemma \ref{lem:cond}.

 Any $b_\ell$ that is bullied by some $a_m$ is larger than $c_2$. Thus, there are at least $s$ integers in the second row that are larger than $c_2$. Therefore, $c_2 \in \{3,4,\ldots, s+t+2\}$ as there can be at most $s$ integers in the first row and at most $t$ integers in the second row that are less than $c_2$. Let $c_2=i$ and let $a_1 \rightarrow b_k$. Then, $c_2 < b_k$ and $b_{k-1}<a_1<b_k$. So, all the numbers from $3$ to $i-1$ are assigned, in order, to $b_2 < \cdots < b_{k-1} < a_1 < \cdots < a_{i-k-1}$. Therefore, $0 \leq i-k-1 \leq s$, that is, $i-s-1 \leq k \leq i-1$. Also, since $k-1$ is the number of integers in the second row that are smaller than $c_2$, it is bounded by $1$ and $t$. Hence, $2 \leq k \leq t+1$.

The ways in which the remaining elements can be assigned values are in bijection with standard Young tableaux of shape $\lambda_{i,k}=(n-2,s+t-k+1,s-i+k+1)$ by Lemma~\ref{lem:syt}, since there is no wrapping between any of the rows. We also have $|\lambda|=N-i$.
This gives us
\begin{equation}\label{tau}
    \tau_{s,t}= \sum_{i=3}^{s+t+2} \sum_{k=\max\{2,i-s-1\}}^{\min\{i-1,t+1\}} f_{\lambda_{i,k}},
\end{equation} 
where $f_\lambda$ is the number of standard Young tableaux of shape $\lambda$.
By the hook length formula for a $3$-row partition~\eqref{3tab}, we have
\begin{multline}
f_{\lambda_{i,k}}  =  \frac{(n-s+i-k-1)(n-s-t+k-2)(t+i-2k+1)}{n(n-1)(s+t-k+2)} \\
 \times\binom{n+2s+t-i}{n-2,s+t-k+1,s+k-i+1} 
 \end{multline}
 Using $N$ for $n+2s+t$ and expanding the multinomial coefficient, we have
\begin{align}\label{hooklen}
f_{\lambda_{i,k}} & =  \frac{(n-s+i-k-1)(n-s-t+k-2)(t+i-2k+1)(N-i)!}{n!(s+t-k+2)!(s-i+k+1)!} \notag\\
&=\frac{(n-s+i-k-1)(n-s-t+k-2)(t+i-2k+1)(N-i+3)!}{n!(s+t-k+2)!(s-i+k+1)!(N-i+3)(N-i+2)(N-i+1)} \notag 
\end{align}

\begin{multline}
f_{\lambda_{i,k}}  =  \frac{(n-s+i-k-1)(n-s-t+k-2)(t+i-2k+1)}{(N-i+3)(N-i+2)(N-i+1)} \\
\times\binom{N-i+3}{n,s+t-k+2,s+k-i+1} 
 \end{multline}

Substituting this formula into ~\eqref{tau} and changing $k$ to $k'=k-2$ and $i$ to $j=i-3$, while plugging in $N=n+2s+t$, we get
\begin{multline}\label{sumi>j}
    \tau_{s,t}=\sum_{j=0}^{s+t-1} \sum_{k'=\text{max }\{j-s,0\}}^{\text{ min }\{j,t-1\}} \frac{(n-s+j-k')(n-s-t+k')(t+j-2k')}{(n+2s+t-j)(n+2s+t-j-1)(n+2s+t-j-2)}\\\times \binom{n+2s+t-j}{n,s+t-k',s-j+k'}.
\end{multline}
Note that when $j\geq s$ and $k'\leq j-s$, the multinomial coefficient in \eqref{sumi>j} becomes 0. Therefore, the index $k'$ can equivalently be summed over the range $0$ to $\min\{j,t-1\}$. Substituting $v$ for $s+t-j-1$ and $u$ for $s-j+k'$ into \eqref{sumi>j}, we get 
\begin{align}\label{tauzeta}
    \tau_{s,t}&=\sum_{v=0}^{s+t-1} \sum_{u=v-t+1}^{\min\{s,v\}} \frac{(n-u)(n+u-s-v-1)(s+v+1-2u)}{(n+s+v+1)(n+s+v)(n+s+v-1)} \binom{n+s+v+1}{n,s+v+1-u,u} \notag\\
    &=\sum_{v=0}^{s+t-1} \sum_{u=v-t+1}^{\min\{s,v\}}\frac{(n+s+v-2)!}{n!(s+v+1)!} \zeta_{s,t} ,
\end{align}
where $\zeta_{s,t}=(n-u)(n+u-s-v-1)(s+v+1-2u)\binom{s+v+1}{u}$. We expand the expression of $\zeta_{s,t}$ to write it as a sum of two parts as follows:
\begin{align}
\zeta_{s,t} =&((n^2-n(s+v+1)+u(s+v+1-2u))\\
            &  \times \bigg((s+v+1-u)\binom{s+v+1}{u}-u\binom{v+s+1}{u}\bigg)\notag \\
      =& (n^2-n(s+v+1))(s+v+1)\bigg(\binom{s+v}{u}-\binom{s+v}{u-1}\bigg) \notag \\
      & +(s+v+1)(s+v)(s+v-1)\bigg(\binom{s+v-2}{u-1}-\binom{s+v-2}{u-2}\bigg). \notag
\end{align}
Plugging $\zeta_{s,t}$ into \eqref{tauzeta}, we have a telescoping sum which computes easily to give
\begin{align*}
    \tau_{s,t}=&\sum_{v=0}^{s+t-1} \frac{(n+s+v-2)!}{n!(s+v+1)!} (n^2-n(s+v+1))(s+v+1)\bigg(\binom{s+v}{s}-\binom{s+v}{v-t}\bigg)\\
    &+\sum_{v=0}^{s+t-1}\frac{(n+s+v-2)!}{n!(s+v+1)!}(s+v+1)(s+v)(s+v-1)\bigg(\binom{s+v-2}{s-1}-\binom{s+v-2}{v-t-1}\bigg)\\
    =&\sum_{v=0}^{s+t-1} \binom{n+s+v}{s+v}\frac{n^2-n(s+v+1)}{(n+s+v)(n+s+v-1)}\bigg(\binom{s+v}{s}-\binom{s+v}{v-t}\bigg)\\
    &+\sum_{v=0}^{s+t-1}\binom{n+s+v-2}{s+v-2}\bigg(\binom{s+v-2}{s-1}-\binom{s+v-2}{v-t-1}\bigg).
\end{align*}
This simplifies to 
\begin{equation}\label{finaltau}
    \tau_{s,t}=\frac{\binom{n+2s+t}{n,s+t,s}}{n+2s+t}\times \frac{nt(s+t)}{(n+s)(n+s+t)} \bigg( -1 + \frac{s}{n} + \frac{(n+s)(n^2+nt-t-s(s+t)-1)}{(n+s-1)(s+t)(n+s+t-1)} \bigg).
\end{equation}
By substituting $\tau_{s,t}=\binom{n+2s+t}{n,s+t,s}T^>(s,t)$ from \eqref{tauT}, we get
\begin{multline}\label{fin}
(n+2s+t)T^>(s,t)= \frac{-nt(s+t)}{(n+s)(n+s+t)}  + \frac{st(s+t)}{(n+s)(n+s+t)}\\ + \frac{nt(n^2+nt-t-s(s+t)-1)}{(n+s-1)(n+s+t)(n+s+t-1)}.
\end{multline}
 Let $i<n$. Denote the terms on the right-hand side of~\eqref{fin} by $A(s,t), B(s,t)$, and $C(s,t)$ respectively. We first compute $\mathscr{A} := A(j-1,i-j)-A(j,i-j-1)-A(j-1,i-j+1)+A(j,i-j)$.
\begin{align*}
        \mathscr{A} & =  \frac{n(i-1)}{n+i-1} \bigg( \frac{j-i}{n+j-1}+\frac{i-j-1}{n+j}\bigg) - \frac{ni}{n+i} \bigg( \frac{j-i-1}{n+j-1}+\frac{i-j}{n+j}\bigg)\\
        & =  \frac{n}{2\binom{n+j}{2}}.
\end{align*}
If we define $\mathscr{B}$ and $\mathscr{C}$ similarly as $\mathscr{A}$ using the inclusion-exclusion formula, we get
$\mathscr{B}  = 
 \frac{n}{2\binom{n+j}{2}}-\frac{n}{2\binom{n+i}{2}}$ and
 $ \mathscr{C}  = 
    \frac{-n}{2\binom{n+i}{2}}$.
    Since $c_{i,j}(n)= \mathscr{A}+\mathscr{B} +\mathscr{C}$, by \eqref{PIE4} we have \[c_{i,j}(n)=\frac{n}{\binom{n+j}{2}}-\frac{n}{\binom{n+i}{2}}.\]
   Note that when $s+t=n$, $T^>(s,t)=0$ by definition. When $j<i=n$,~\eqref{PIE4} becomes
    \[c_{i,j}(n)  = (n+i+j-2)\,T^>(j-1,i-j)-(n+i+j-1)\,T^>(j,i-j-1).\]
    Let $\mathscr{A}'=A(j-1,n-j)-A(j,n-j-1)$ and define $\mathscr{B}'$ and $\mathscr{C}'$ similarly. Simplifying the equations resulting from substituting the expressions for $A(s,t),B(s,t)$ and $C(s,t)$, we get
\begin{align*}
 c_{n,j}(n)&=\mathscr{A}'+\mathscr{B}'+\mathscr{C}'\\
 &= \frac{-1}{2n-1}-\frac{-2n(n-1)}{(n+j-1)(n+j)}+\frac{2n(n-1)}{(n+j-1)(n+j-2)}\\
 &=\frac{n(j+1)}{\binom{n+j}{2}}-\frac{n(j-1)}{\binom{n+j-1}{2}}-\frac{n}{\binom{2n}{2}},
\end{align*} 
  thereby proving Theorem~\ref{thm:i>j}.
\end{proof}

\section[Correlations for the case i < j: Preliminary analysis]{Correlations for the case $i<j$: Preliminary analysis}
\label{sec:i<j}

Let $m_{s,t}=(s,t,n-s-t)$ and $N=n+2s+t$. Let $\mathscr{P}_{s,t}$ be the set of all continuous multiline queues $Q$ of type $\textbf{m}_{s,t}$ that satisfy $\omega_1=2$, $\omega_2=3$, and $Q_{3,n}=N$ where $\omega= \mathcal{B}(Q)$. Let $\theta_{s,t}$ be the cardinality of set $\mathscr{P}_{s,t}$.   
Recall that to find the correlation function $c_{i,j}(n)$ for $i<j$, we need to calculate the probability function $T^<(s,t)=\mathbb{P}\{\omega_1=2,\omega_2=3,Q_{3,n}=N \}$ and then substitute it into \eqref{PIE3}. First, consider the following continuous multiline queue:
\[\begin{array}{cccccccc}
     a_1 & a_2 & \cdots & a_s\\
     b_1 & b_2 & \cdots & \cdots & b_{s+t}\\
     c_1 & c_2 & \cdots & \cdots & \cdots & c_{n-1} & N.\\
     \hline
     x & y & \cdots & \cdots & \cdots & \cdots & \cdots
\end{array}\]
Define $n_{x,y}(s,t,n)$ as the number of continuous multiline queues of type $\textbf{m}_{s,t}$ with $N$ in the last row that project to a word with $x$ and $y$ in the first and the second place respectively as above, where $x,y \in \{1,2,3\}$. Note that $\theta_{s,t}=n_{2,3}(s,t,n)$. Also, let $n_z(s,t,n)$ be the number of continuous multiline queues of type $\textbf{m}_{s,t}$ with $N$ in the last row that project to a word with $z$ in the first place. By similar arguments as in Remark~\ref{rem:rotsym}, $n_z$ also gives the number of continuous multiline queues that project to a word with $z$ in the second place, due to the rotational symmetry of multispecies TASEP. Therefore, we have 
\begin{equation}\label{nx3}
    n_{1,3} + n_{2,3} + n_{3,3} = n_{3},
\end{equation}
for fixed $s,t$ and $n$. Using Lemma~\ref{lem:shift} again, we have
\begin{equation}\label{n3}
n_{3}(s,t,n) = \frac{n-s-t}{N}\binom{n+2s+t}{n,s,s+t},
\end{equation}
since there are $n-s-t$ particles that are labelled $3$. We compute $n_{3,3}(s,t,n)$ in the following lemma.
\begin{lemma}\label{lem:n33}
 $n_{3,3}(s,t) = \binom{N-1}{s} f_{(n-2,s+t)}$. 
 \end{lemma}
 \begin{proof}
 Let $Q$ be a  continuous multiline queue of type $\textbf{m}_{s,t} $ that is counted by $n_{3,3}(s,t,n)$ such that
 \[Q =
\begin{array}{ccccccc}
     a_1 & a_2 & \cdots & a_s\\
     b_1 & b_2 & \cdots & \cdots & b_{s+t}\\
     c_1 & c_2 & \cdots & \cdots & \cdots &c_{n-1} & N\\
     \hline
     3 & 3 & \cdots & \cdots & \cdots & \cdots & \cdots.
\end{array}\]
Here, neither $c_1$ nor $c_2$ is bullied by any element in the second row. This implies that $b_1 > c_2$, and there is no wrapping from the second to the third row. It follows that there are no restrictions on the $a_i$ and hence their values can be chosen from the set $\{1,2, \ldots, N-1 \}$ in $\binom{N-1}{s}$ ways. $c_1$ and $c_2$ take the smallest two integers available after fixing $a_i$'s. Since there are no wrappings from the second to the third row, the configurations formed by the  remaining variables are in 
% $$\begin{array}{ccccccccccccc}
%      & &b_{1} & < & \cdots & < &  b_{s+t-j} & < &\cdots & < & b_{s+t}\\
%      &  & \wedge &  & \cdots & & \wedge &  & \cdots & & \wedge\\
%      c_3 & < &  \cdots & < & \cdots & < & c_{n-j} & < & \cdots & < & c_n.
% \end{array}$$
% The arrangements are in
bijection with standard Young tableaux of shape $(n-2,s+t)$ by Lemma~\ref{lem:syt}. Therefore, they are $f_{(n-2,s+t)}$ in number.
\end{proof}
\begin{remark}\label{rem:n3}

The argument in the proof of Lemma~\ref{lem:n33} can be adapted to give an alternative expression for $n_3(s,t,n)$, namely $n_3(s,t,n)=\binom{N-1}{s} f_{(n-1,s+t)}$, which is equivalent to the previously derived formula in~\eqref{n3}.
\end{remark}
 
Thus, given \eqref{nx3}, it suffices to find $n_{1,3}$ for our analysis, as this allows us to isolate $n_{2,3}(s,t,n)= \theta_{s,t}$ by subtraction. The values of $n_{1,3}(s,t,n)$ for different $s$ and $t$ when $n=5,6$ are shown in the following tables:
\begin{table}[ht]
   \begin{tabular}{| c||c | c | c |}
    \hline
     $s$\textbackslash $t$  & 1 & 2 & 3\\
    \hline \hline
     1   & 9 & 14 & 14\\
    \hline
     2   & 126 & 140 & 0\\
     \hline
     3   & 770 & 0 & 0\\
     \hline
    \end{tabular}
    \quad
    \begin{tabular}{| c||c | c | c | c |}
    \hline
     $s$\textbackslash $t$ & 1 & 2 & 3 & 4\\
    \hline \hline
     1  & 14 & 28 & 42 & 42\\
    \hline
     2  & 280 & 462 & 504 & 0\\
     \hline
     3  & 2772 & 3276 & 0 & 0\\
     \hline
     4  & 15288  & 0 & 0 & 0\\
     \hline
    \end{tabular}
    \vspace{0.2in}
    \caption{Data for $n_{1,3}(s,t,n)$ for $n=5,6$.}
    \label{tab:n13}
\end{table}

By studying the values for $n_{1,3}$ for different $s,t$ and $n$, we formulate the following expression for $n_{1,3}$, which we prove in Section~\ref{sec:n13}.
\begin{proposition}\label{prop:n13}
For $s , t \geq 1$ and $n > s+t$, we have 
\begin{equation}
    n_{1,3}(s,t,n)=\binom{N-1}{s-1} f_{(n-1,s+t)}.
\end{equation}
\end{proposition}

\begin{proof}[Proof of Theorem~\ref{thm:i<j}]

From straightforward calculations 
 using equations ~\eqref{nx3}, ~\eqref{n3}, Lemma~\ref{lem:n33} and Proposition~\ref{prop:n13}, we have
\[n_{2,3}(s,t,n)=\frac{1}{N}\binom{n+2s+t}{n,s,s+t}\Bigg(\frac{n+t}{\binom{n+s+t}{s+t}}f_{(n-1,s+t)}-\frac{n+s+t}{\binom{n+s+t}{s+t}} f_{(n-2,s+t)}\Bigg).\]
Since $n_{2,3}(s,t,n)= \theta_{s,t}= \binom{n+2s+t}{n,s,s+t}T^<(s,t)$, we have 
\begin{equation}\label{Tst}
    %(n+2s+t)T^<(s,t)=\frac{s(n-s-t)}{(n+s+t)(n+s+t-1)} + \frac{s+t(n-s-t+1)}{n+s+t}
   (n+2s+t)T^<(s,t)= \frac{n+t}{\binom{n+s+t}{s+t}}f_{(n-1,s+t)}-\frac{n+s+t}{\binom{n+s+t}{s+t}} f_{(n-2,s+t)}.
\end{equation}
The proof is completed by substituting \eqref{Tst} into \eqref{PIE3}. First let $i+1 < j$. Denote the terms of the right-hand side of~\eqref{Tst} by $C(s,t)$ and $D(s,t)$ respectively.
We first compute $\mathscr{C}:=C(i-1,j-i)-C(i,j-i-1)-C(i-1,j-i+1)+C(i,j-i)$.
\begin{align*}
        \mathscr{C} =&  \frac{f_{(n-1,j-1)}}{\binom{n+j-1}{j-1}} (n+j-i-n-j+i+1)
        -\frac{f_{(n-1,j)}}{\binom{n+j}{j}} (n+j-i-n-j+i-1)\\
        %& = & \frac{n-j+1}{n+j-1} - \frac{n-j}{n+j}\\
        =&  \frac{n}{\binom{n+j}{2}}.
\end{align*}
If we similarly define $\mathscr{D}:=D(i-1,j-i)-D(i,j-i-1)-D(i-1,j-i+1)+D(i,j-i)$, we get $\mathscr{D}=0$. Since $c_{i,j}(n)= \mathscr{C}+\mathscr{D}$ by \eqref{PIE3}, we have $c_{i,j}(n)=\frac{n}{\binom{n+j}{2}}$.

Note that $T^<(s,0)=0$ by definition. Therefore when $i+1=j$,~\eqref{PIE3} becomes
\begin{multline*}
    c_{j-1,j}(n)  = (n+2j-3)\,T^<(j-2,1)-(n+2j-2)T^<(j-2,2)\\ + (n+2j-1)\,T^<(j-1,1).
\end{multline*}    
    
    Let $\mathscr{C}'=C(j-2,1)-C(j-2,2)+C(j-1,1)$ and define $\mathscr{D}'$ similarly. Then,
\begin{align*}
\mathscr{C}'&= \frac{n+1}{\binom{n+j-1}{j-1}}f_{(n-1,j-1)} - \frac{n+2}{\binom{n+j}{j}}f_{(n-1,j)} + \frac{n+1}{\binom{n+j}{j}}f_{(n-1,j)},\\
%& = & \frac{n}{\binom{n+j-1}{j-1}}f_{(n-1,j-1)}+ \frac{f_{(n-1,j-1)}}{\binom{n+j-1}{j-1}}-\frac{f_{(n-1,j)}}{\binom{n+j}{j}}.\\
\mathscr{D}'&=-\frac{n+j-1}{\binom{n+j-1}{j-1}}f_{(n-2,j-1)}
\end{align*} 
Thus, $c_{j-1,j}(n) = \mathscr{C}'+\mathscr{D}'
               % & = &\frac{n}{\binom{n+j-1}{j-1}}f_{(n-1,j-1)}-\frac{n+j-1}{\binom{n+j-1}{j-1}}f_{(n-2,j-1)} + \frac{n}{\binom{n+j}{2}}\\
=  \frac{ni}{\binom{n+i}{2}}+ \frac{n}{\binom{n+j}{2}},$ completing the proof.
\end{proof}

\section{Proof of Theorem~\ref{thm:i<j} and Resolution of the Aas--Linusson Conjecture}\label{sec:progress}
\subsection{Direct Proof of Proposition~\ref{prop:n13}.}\label{sec:n13}
To compute $n_{1,3}(s,t,n)$, we count the number of continuous multiline queues with the following configuration. Recall that $N=n+2s+t$ while $a_i,b_i$, and $c_i$ are distinct integers from the set $[N]$.
 \[\begin{array}{ccccccccc}
    & a_1 & a_2 & \cdots & a_s\\
    & b_1 & b_2 & \cdots & \cdots & b_{s+t}\\
    & c_1 & c_2 & \cdots & \cdots & \cdots &c_{n-1} &N.\\
     \hline
    \omega= &1 & 3 & \cdots & \cdots & \cdots &\cdots & \cdots
\end{array}\]

Since $\omega_2=3$, $c_2$ cannot be bullied by any $b_i$. Therefore, there can be at most one wrapping from the second row to the third row. These configurations can be classified into the following two distinct types:
\begin{enumerate}
     \item There is no wrapping from the second row.
     \item Only $b_{s+t}$ wraps around and bullies $c_1$.
 \end{enumerate}
 Let us denote the number of the continuous multiline queues from the two cases by $\alpha_{1,3}(s,t,n)$ and $\beta_{1,3}(s,t,n)$ respectively. We will enumerate them separately. Then, we have
 \begin{align}\label{n13sum}
     n_{1,3}(s,t,n) =\alpha_{1,3}(s,t,n) + \beta_{1,3}(s,t,n).
 \end{align} 

\begin{proposition}\label{prop:nowrapping}
 A continuous multiline queue of type $\textbf{m}_{s,t}$, where there is no wrapping from the second row,  projects to a word $\omega$ with $\omega_1=1$ and $\omega_2=3$ if and only if
 \begin{enumerate}
     \item\label{item:it1} there exists $1\leq i \leq s$ such that $a_i \rightarrow b_1 \rightarrow c_1$, and
     \item\label{item:it2} $b_2 > c_2$.
 \end{enumerate}
 \end{proposition}
\begin{proof}
Let $Q$ be a continuous multiline queue with no wrapping from the second row, satisfying (\ref{item:it1}) and (\ref{item:it2}), and let $\omega$ be the projected word of $Q$. The reverse implication is straightforward. We proceed to prove the forward implication. 
Note that $b_2<c_2$ implies that $c_2$ is bullied by either $b_1$ or $b_2$, resulting in $\omega_2<3$, which is a contradiction. Therefore, $b_2>c_2$.

Since there is no wrapping from the second row to the third row, $\omega_1=1$ can only occur when there exist $a_i$ and $b_j$ such that $a_i \rightarrow b_j \rightarrow c_1$ for some $i$, meaning $b_j < c_1$. Then, $b_j < c_1 < c_2 < b_2$ implies that $j=1$.
\end{proof}

 \begin{theorem}\label{thm:part1}
  Let $\alpha_{1,3}(s,t,n)$ denote the number of continuous multiline queues of type $\textbf{m}_{s,t}$, with $N$ in the last row, where there is no wrapping from the second row, and the projected word $\omega$ satisfies $\omega_1=1$ and $\omega_2=3$. Then, \[\alpha_{1,3}(s,t)= \bigg(\binom{N-2}{s-1}-\binom{N-2}{s-3}\bigg)f_{(n-1,s+t)}.\]
  \end{theorem}
 \begin{proof}
 Let the continuous multiline queues counted by $\alpha_{1,3}(s,t,n)$ exhibit the following configuration:
 \[\begin{array}{ccccccccc}
    & a_1 & a_2 & \cdots & a_s\\
    & b_1 & b_2 & \cdots & \cdots & b_{s+t}\\
    & c_1 & c_2 & \cdots & \cdots & \cdots &c_{n-1} &N\\
     \hline
    \omega= &1 & 3 & \cdots & \cdots & \cdots &\cdots & \cdots
\end{array}\]
    
Let us first assume that $a_1 < b_1$. Coupled with $b_1 < c_1$ from the proof of Proposition~\ref{prop:nowrapping}, this implies $a_1=1$ and $a_1 \rightarrow b_1 \rightarrow c_1$, which gives $\omega_1=1$.
 The remaining $a_i$'s take increasing values between $2$ and $N-1$ in $\binom{N-2}{s-1}$ ways. Additionally, we have  $c_1 < c_2 < b_2$. Thus, $b_1, c_1$, and $c_2$ are assigned the three smallest values after eliminating those selected by the $a_i$'s. Since there is no wrapping from either of the rows, these configurations are in bijection with standard Young tableaux of shape $\lambda=(n-2,s+t-1)$ (see Lemma~\ref{lem:syt}). This gives us $\binom{N-2}{s-1}f_{(n-2,s+t-1)}$ continuous multiline queues that contribute to $\alpha_{1,3}(s,t,n)$ for the case $a_1 < b_1$. 

Now, let us assume that $a_1 > b_1$. Then, there exists an $a_i$ such that $a_i \xrightarrow{W} b_1$ by Proposition~\ref{prop:nowrapping}(\ref{item:it1}). The inequalities $b_1 < c_1$ and $b_1<a_1$ imply that $b_1=1$. We first focus on finding the number of continuous multiline queues where the only constraints are $b_1=1, c_n=N$, and $c_2<b_2$, with no wrapping from the second row to the third row. For these, 
  $b_1 \rightarrow c_1$, and we get $\omega_1 \in \{1,2\}$ and $\omega_2=3$. Observe that the $a_i$'s can take any value other than $1$ and $N$. Then, $c_1$ and $c_2$ are assigned the smallest two values after eliminating $1$ and the integers selected by the $a_i$'s.  Also, the configurations formed by the remaining $b_i$'s and $c_i$'s satisfy the following inequalities:
\[\begin{array}{ccccccccccccc}
      & &b_{2} & < & \ldots & < &  b_{s+t-j} & < &\ldots & < & b_{s+t}\\
      &  & \wedge &  & \cdots & & \wedge &  & \cdots & & \wedge\\
      c_3 & < &  \ldots & < & \ldots & < & c_{n-j} & < & \ldots & < & c_n,
\end{array}\]
and hence they can be arranged in $f_{(n-2,s+t-1)}$ ways.  The required number is given by $\binom{N-2}{s}f_{(n-2,s+t-1)}$. To eliminate the continuous multiline queues where  $\omega_1=2$ and $\omega_2=3$, we need to subtract the number of continuous multiline queues where $b_1=1$, $b_2<c_2$ with no wrapping in any row, from the number $\binom{N-2}{s}f_{(n-2,s+t-1)}$. To do that, let $c_2 = k+2$ for some $k \leq s$. Then, there are $k$ values of $a_i$ that are smaller than $c_2$, and there are $k+1$ ways to assign values to $c_1,a_1,\cdots a_k$. The remaining elements of the continuous multiline queue satisfy the following inequalities:
\[\begin{array}{ccccccccccccc}
     & & & & & & a_{k+1} & < & a_{k+2} & < & \cdots & < & a_s\\
     & & & & & & \wedge & & \wedge & & \cdots & & \wedge\\
     & &b_{2} & < & \cdots & < &  b_{t-k-1} & < & b_{t-k} & < &\cdots & < & b_{s+t}\\
     & & \wedge & & \ldots &  & \wedge & & \wedge & & \cdots & & \wedge\\
     c_3 & < & \cdots& & \cdots & & \cdots &  &\cdots & & \cdots & < & c_n.
\end{array}\]
Such configurations are in bijection with standard Young tableaux of shape $\lambda_k=(n-2,s+t-1,s-k)$, which are $f_{(n-2,s+t-1,s-k)}$ in number. Thus, the number of continuous multiline queues contributing to $\alpha_{1,3}(s,t,n)$ where $a_1>b_1$ is given by 
\begin{equation*}
\binom{N-2}{s}f_{(n-2,s+t-1)}-\sum_{k=0}^s (k+1)f_{(n-2,s+t-1,s-k)}.
 \end{equation*}
 Adding this to $\binom{N-2}{s-1}f_{(n-2,s+t-1)}$ for the case $a_1<b_1$, we get
\begin{multline}\label{alpha13}
    \alpha_{1,3}(s,t,n) = \binom{N-2}{s-1}f_{(n-2,s+t-1)}+\binom{N-2}{s}f_{(n-2,s+t-1)}\\
    -\frac{f_{(n-2,s+t-1)}}{n(s+t)}\sum_{k=0}^s (k+1)(t+k)(n-s+k)\binom{N-k-3}{s-k}.
\end{multline}
 Summing $(k+1)(t+k)(n-s+k)\binom{N-k-3}{s-k}$ over $k=0$ to $s$, we get
\begin{multline}\label{longsum}
    \frac{(N-1)!}{s!(n+s+t+1)!}(s((n+2)^2t+2n(n+2)+t^2-t^3-4)-s^2 ((n+2t+1)t-6)\\
    -s^3(t+2)+nt(n+t)(n+t+1)). 
\end{multline}
Let the large expression inside the parentheses in \eqref{longsum} be denoted by $(\ast)$. Simplifying the right-hand side of \eqref{alpha13}, we have

\begin{align*}
\alpha_{1,3}(s,t,n) &= \bigg(1-\frac{(*)}{n(s+t)(n+s+t)(n+s+t+1)}\bigg)\binom{N-1}{s}f_{(n-2,s+t-1)}\\
    &=\frac{s(n+t+2)(n+s+t-1)(n+s+t-2)}{n(s+t)(n+s+t)(n+s+t+1)}\binom{N-1}{s}f_{(n-2,s+t-1)}\\
    &=\frac{s(n+t+2)}{(n+s+t)(n+s+t+1)}\binom{N-1}{s}f_{(n-1,s+t)}\\
    &=\bigg(\binom{N-2}{s-1}-\binom{N-2}{s-3}\bigg)f_{(n-1,s+t)}.
\end{align*}
\end{proof}
 
\begin{proposition}\label{prop:wrapping}
 A continuous multiline queue of type $\textbf{m}_{s,t}$ with $N$ in the last row, such that there is exactly one wrapping from the second row, projects to a word $\omega$ with $\omega_1=1$ and $\omega_2=3$ if and only if
\begin{enumerate}
     \item there exists $ i < s$ such that $a_i \rightarrow b_{s+t-1} \rightarrow c_n$ and $a_{i+1} \rightarrow b_{s+t} \xrightarrow{W} c_1$, and
     \item $b_1 > c_2$.
\end{enumerate}
 \end{proposition}
\begin{proof}
Let $Q$ be a continuous multiline queue of type $\textbf{m}_{s,t}$ with $N$ in the last row and exactly one wrapping from the second row. Let $\omega=\mathcal{B}(Q)$ be the projected word such that $\omega_1=1$ and $\omega_2=3$. If $b_1<c_2$ in $Q$, then $c_2$ is bullied by at least one $b_i$ (either by $b_1$ or by the only wrapping), which results in $\omega_2<3$, a contradiction. Therefore, $b_1 >c_2$.

Then, $\omega_1=1$ implies that $c_1$ is bullied via wrapping. Let $j$ and $v$ be integers such that $a_j \rightarrow b_v \xrightarrow{W} c_1$. If $v<s+t$, then there exists $w>v$ such that $b_w \xrightarrow{W} c_2$, once again leading to a contradiction. Therefore, we conclude that $v=s+t$.

Next, since $a_j \rightarrow b_{s+t} \xrightarrow{W} c_1$,  there exist integers $i<j$ and $u <s+t$ such that $a_i\rightarrow b_u \rightarrow c_n$, giving $\omega_n=1$. Otherwise, $b_{s+t}$ bullies $c_n=N$, and there is no wrapping from the second row to the third. Since $b_u \rightarrow c_n$, the entries $b_{u+1}, b_{u+2}, \cdots, b_{s+t}$ all wrap around to the third row. But since there can be only one wrapping, we must have $u+1=s+t$. Additionally, because $a_i \rightarrow b_{s+t-1}$, $a_k$ wraps around to the second row for all $k>i+1$, thereby proving that $j={i+1}$. The reverse implications can be easily verified.
\end{proof}
 Recall that the number of continuous multiline queues satisfying Proposition~\ref{prop:wrapping} is represented by $\beta_{1,3}$. The values of $\beta_{1,3}(s,t,n)$ for different $s$ and $t$ for $n=5,6$ are shown in  Table~\ref{tab:beta13}. 
\begin{table}[ht]
    \centering
    \begin{tabular}{| c||c | c | c |}
    \hline
     $s$\textbackslash $t$& 0 & 1 & 2\\
    \hline \hline
     2   & 9 & 14 & 14 \\
    \hline
     3   & 140 & 154 & 0\\
     \hline
     4   & 924  & 0 & 0\\
     \hline
    \end{tabular}
    \quad
    \begin{tabular}{| c||c | c | c | c |}
    \hline
     $s$\textbackslash $t$& 0 & 1 & 2 & 3\\
    \hline \hline
     2   & 14 & 28 & 42 & 42 \\
    \hline
     3   & 280 & 504 & 546 & 0\\
     \hline
     4   & 3276 & 3822  & 0 & 0\\
     \hline
     5   & 19110 & 0 & 0 & 0\\
     \hline
    \end{tabular}
    \vspace{0.2in}
    \caption{Data for $\beta_{1,3}(s,t,n)$ for n=5,6}
    \label{tab:beta13}
\end{table}

Observing the data in Table~\ref{tab:beta13}, we formulate the following expression for $\beta_{13}$:
\begin{proposition}\label{prop:beta}
$\beta_{1,3}(s,t,n)=
 \binom{N-1}{s-2}f_{(n-1,s+t)}$.
\end{proposition}
\begin{remark}

 It is interesting to note that, despite its seemingly simple formulation, the techniques we used to prove the earlier cases do not work here, as there is no straightforward bijection available to prove Proposition~\ref{prop:beta}. For now, we independently prove Proposition~\ref{prop:n13} using the properties of continuous multiline queues counted by $\beta_{1,3}$. We will return to computing $\beta_{1,3}$ using alternative methods at the end of this section.
\end{remark}

We first prove that $\beta_{1,3}(s,t,n)$ satisfies a simple recurrence.
\begin{lemma}\label{lem:rec} For $s,t \geq 2$ and $s+t < n$, we have:
\begin{equation}\label{recbeta}
\beta_{1,3}(s,t,n) = \beta_{1,3}(s-1,t+1,n) + \beta_{1,3}(s,t-1,n) + \beta_{1,3}(s,t,n-1).
\end{equation}
\end{lemma}
\begin{proof}
Consider a continuous multiline queue $Q$ that satisfies the conditions from Proposition~\ref{prop:wrapping}. Let $c_2 = k+2$ for some $k \leq s$. Then, $Q$ has the following configuration: 
\[\begin{array}{ccccccccc}
     a_1 & a_2 & \cdots & a_k &\cdots& a_s\\
     b_1 & b_2 & \cdots & b_k &\cdots & \cdots & b_{s+t}\\
     c_1 & k+2 & \cdots & \cdots & \cdots & \cdots &\cdots &c_{n-1} &N\\
     \hline
     1 & 3 & \cdots & \cdots  &\cdots & \cdots &\cdots& \cdots &1
\end{array}\]
Since $b_1 > c_2$, there are only $\binom{k+1}{k} = k+1$ ways to assign values to $c_1,a_1,\ldots,a_k$ from the set $[k+1]$. Thus, for each $u \leq k, \, a_u \rightarrow b_u$.
Given that the rows are strictly increasing, exactly one of the following cases is true:
\begin{enumerate}
    \item $a_{k+1}=k+3$\\
    In this case, $a_{k+1}$ bullies $b_{k+1}$, so it does not bully $b_{s+t-1}$ or $b_{s+t}$. Deleting $a_{k+1}$ and subtracting $1$ from all values greater than $k+3$ does not affect the bully paths that include $b_{s+t-1}$ or $b_{s+t}$. In the new projected word, there is one less $1$, and one more $2$. Therefore, there are $\beta_{1,3}(s-1,t+1,n)$ such continuous multiline queues.
    \item $b_1=k+3$\\
    Since, $b_1$ bullies $c_3$, deleting $b_1$ and subtracting $1$ from all values greater than $k+3$ in the continuous multiline queue does not affect the bully paths that include $b_{s+t-1}$ or $b_{s+t}$. In the new projected word, there is one less $2$, and one more $3$. Therefore, there are $\beta_{1,3}(s,t-1,n)$ such continuous multiline queues.
    \item $c_3=k+3$\\
    Finally, in this case, $c_3$ is not bullied by any $b_u$ because $b_1>c_3$, and there is exactly one wrapping from the second to the third row. Deleting $c_3$ and subtracting $1$ from all values greater than $k+3$ does not affect any bully path, and the length of the resulting projected is reduced by one. There are $\beta_{1,3}(s,t,n-1)$ such continuous multiline queues.
\end{enumerate}
Therefore, $\beta_{1,3}(s,t,n)$ is obtained by adding the numbers in each of the above cases. 
\end{proof}

We can now verify the equation 
\[\alpha_{1,3}(s,t,n) = \alpha_{1,3}(s-1,t+1,n) + \alpha_{1,3}(s,t-1,n) + \alpha_{1,3}(s,t,n-1),\] for $s,t \geq 2$ and $n>s+t$ by plugging in the value of $\alpha_{1,3}(s,t,n)$ from Theorem \ref{thm:part1}. This along with Lemma \ref{lem:rec}  gives the recurrence relation \begin{equation*}
    n_{1,3}(s,t,n) = n_{1,3}(s-1,t+1,n) + n_{1,3}(s,t-1,n) + n_{1,3}(s,t,n-1),
\end{equation*}
for $s,t\geq 2$ and $n>s+t$.
Let $P(s,t,n)$ denote the product $\binom{N-1}{s-1} f_{n-1,s+t}$ from Proposition~\ref{prop:n13}. We have
 \[P(s,t,n) = P(s-1,t+1,n) + P(s,t-1,n)+ P(s,t,n-1),\]
using Pascal's rule and the hook length recurrence relation $f_{(a,b)}=f_{(a-1,b)} + f_{(a,b-1)}$ where $a\geq b>1$ (see Remark~\ref{rem:hookrec}). As a result, $P(s,t,n)$ satisfies the same recurrence relation as $n_{1,3}(s,t,n)$. Additionally, $n_{1,3}(s,t,s+t)=0$ because there is no $3$ in the projected word.
By \eqref{n13sum}, we also have 
\[
n_{1,3}(1,t,n)=\alpha_{1,3}(1,t,n)+0
=f_{(n-1,t+1)},
\]
because $\beta_{1,3}(1,t,n)=0$ for all $t$ and $n$. This holds
because, by Proposition~\ref{prop:wrapping}(1), we need $s>1$ to ensure exactly one wrapping from the second row to the third row. Thus, proving the initial condition $n_{1,3}(s,1,n)=P(s,1,n)$ completes the proof of Proposition~\ref{prop:n13}. To this end, we have the following result.
\begin{proposition}\label{prop:incon}
For $s,n$ such that $1 < s+1 <n$, we have
    \begin{equation}\label{n13}
    n_{1,3}(s,1,n) = \binom{n+2s}{s-1}f_{(n-1,s+1)}.
    \end{equation} 
\end{proposition}

\begin{proof}
Recall that $\textbf{m}_{s,t}=(s,t,n-s-t)$, and that $n_{x,y}(s,t,n)$ counts the number of continuous multiline queues $Q$ of type $\textbf{m}_{s,t}$, with $N$ in the last row, such that $Q$ projects to a word $\omega$ where $\omega_1=x$ and $\omega_2=y$. 
\[Q=\begin{array}{ccccccccc}
     a_1 & a_2 & \cdots &  a_s\\
     b_1 & b_2 & \cdots & \cdots & b_{s+t}\\
     c_1 & c_2 &  \cdots & \cdots & \cdots & c_{n-1}& N.\\
     \hline
     x & y & \cdots & \cdots  &\cdots & \omega_{n-1} & \omega_n
\end{array}\]
Let $\mu_{x,y}$ be the probability that a particle labelled $x$ is immediately followed by a particle labelled $y$ in the continuous TASEP of type $\textbf{m}_{s,t}$. 
 Then, by Lemma~\ref{lem:rot}, 
 \[\mu_{x,y}=\frac{n+2s+t}{\binom{n+2s+t}{s,s+t,n}}n_{x,y}.\]
Consider $Q'$, a continuous multiline queue of type $\textbf{m}_u=(u,n-u)$ such that the largest integer $N'=n+u$ is in the last row. Recall from Section~\ref{sec:twospecies}, that $\delta_{c,d}(u,n-u)$ counts the number of continuous multiline queues $Q'$ with $N'$ in the last row, such that $Q'$ projects to a word $\omega'$ where $\omega'_1=c$ and $\omega'_2=d$. 
\[Q'=\begin{array}{ccccccccc}
     a_1 & a_2 & \cdots &  a_u\\
     b_1 & b_2  & \cdots & \cdots & b_{n-1}& N'\\
     \hline
     c & d & \cdots  &\cdots & \omega'_{n-1} & \omega'_n
\end{array}.\]
From \eqref{del22}, we have that $\delta_{2,2}(u,n-u)=f_{(n-2,u)}$. Let $\epsilon_{c,d}(n)$ denote the probability that a particle labelled $c$ is immediately followed by one labelled $d$ in a continuous two-species TASEP of type $\textbf{m}_u$. Then, again by Lemma~\ref{lem:rot},
 \[\epsilon_{c,d}(n)=\frac{n+u}{\binom{n+u}{u,n}}\delta_{c,d}(u,n-u).\]
We can define a lumping from the continuous three-species TASEP of type $\textbf{m}_{s,t}$ to a continuous two-species TASEP as follows. Let $\Omega_{s,t}$ and $\Omega_s$ be the set of labelled words on a ring of type $\textbf{m}_{s,t}$ and $\textbf{m}_s=(s,n-s)$ respectively. Let $f :\Omega_{s,t} \rightarrow \Omega_s$ be a map defined as follows:
\[f(\omega_1, \ldots, \omega_n)=(f(\omega_1),\ldots,f(\omega_n)),\]
where \[f(i)=\begin{cases}
1, & \text{if }i=1,\\
2, & \text{if }i=2,3.
\end{cases}\]
\begin{table}[ht]
    \centering
    \begin{tabular}{| c||c | c | c |}
    \hline
     $x$\textbackslash $y$ & 1 & 2 & 3\\
    \hline \hline
     1 & \cellcolor{yellow!25}$\mu_{1,1}$& \cellcolor{green!25}$\mu_{1,2}$& \cellcolor{green!25}$\mu_{1,3}$  \\
     \hline
     2 & \cellcolor{red!25}$\mu_{2,1}$& \cellcolor{blue!25}$\mu_{2,2}$& \cellcolor{blue!25}$\mu_{2,3}$\\
     \hline
     3 & \cellcolor{red!25}$\mu_{3,1}$& \cellcolor{blue!25}$\mu_{3,2}$& \cellcolor{blue!25}$\mu_{3,3}$\\
     \hline
\end{tabular}
\vspace{0.2in}
    \caption{The table contains the correlations of two adjacent particles in $\Omega_{s,t}$. The table is divided into four parts, and entries of the yellow, green, red, and blue sections contribute to the correlations  $\epsilon_{1,1},\epsilon_{1,2},\epsilon_{2,1}$ and $\epsilon_{2,2}$ respectively in $\Omega_s$. }
    \label{tab:lump}
\end{table}

\noindent
By lumping the Markov process, we have 
\begin{equation*}
 \epsilon_{2,2}=\{\mu_{2,2}+\mu_{3,2}+\mu_{2,3}+\mu_{3,3}\}.   
\end{equation*}
That is,
\begin{equation}\label{system}
\frac{n+s}{\binom{n+s}{s}}\delta_{2,2} = \frac{n+2s+t}{\binom{n+2s+t}{n,s,s+t}}\{n_{2,2}+n_{3,2}+n_{2,3}+n_{3,3}\}.
\end{equation}
Let $t=1$. Then, $n_{2,2}=0$, because there is only one particle with label $2$ in the continuous three-species TASEP of type $\textbf{m}_{s,1}$. Note that $n_{3,2}(s,t,n)=\tau_{s,t}$ from Section~\ref{sec:i>j}. Thus, substituting $t=1$ into \eqref{finaltau}, we obtain  
\begin{equation}\label{n32}
    \frac{n+2s+1}{\binom{n+2s+1}{n,s,s+1}}n_{3,2}(s,1,n)= \frac{(n-s-1)(n-s+1)}{(n+s-1)(n+s+1)}.
\end{equation}
 We also have 
\begin{equation}\label{ndot3}
\{n_{1,3}+n_{2,3}+n_{3,3}\}(s,1,n) = \frac{n-s-1}{n+2s+1}\binom{n+2s+1}{n,s,s+1},
\end{equation}
by substituting $t=1$ into \eqref{nx3}.
Solving~\eqref{ndot3} for $n_{1,3}$ using \eqref{system} and \eqref{n32} gives 
\begin{align*}
n_{1,3}(s,1,n)&=\frac{s(n-s-1)}{(n+s+1)(n+2s+1)}\binom{n+2s+1}{n,s,s+1}
\\
&=\binom{n+2s}{s-1}f_{(n-1,s+1)},
\end{align*}
thereby proving the result.
\end{proof}
Now, we can also prove Proposition~\ref{prop:beta} directly using Proposition~\ref{prop:n13} and Theorem~\ref{thm:part1} as follows.
\begin{proof}[Proof of Proposition~\ref{prop:beta}]
\begin{align*}
   \beta_{1,3}(s,t,n)&=n_{1,3}(s,t,n)-\alpha_{1,3}(s,t,n)\\
   &=\bigg(\binom{N-1}{s-1}-\binom{N-2}{s-1}+\binom{N-2}{s-3}\bigg)f_{(n-1,s+t)}\\
   &=\binom{N-1}{s-2}f_{(n-1,s+t)}.
\end{align*}\end{proof}
\subsection{An Alternative Approach via First Principles}\label{sec:betaproof}

In this section, we additionally describe the developments made towards a direct proof of Proposition~\ref{prop:beta} using the first principles.
Recall the recurrence relation \eqref{recbeta}. The equation holds true for $s,t\geq 2$ and $s+t\leq n$. It is easy to verify that the product $\binom{N-1}{s-2}f_{(n-1,s+t)}$ satisfies the same recurrence relation as $\beta_{1,3}(s,t,n)$. Thus, it is sufficient to show that the initial conditions are the same for both quantities. The conditions are as follows:
\begin{align}\label{eq:beta1}
    \beta_{1,3}(1,t,n)&=0,\notag \\
    \beta_{1,3}(s,t,s+t)&=0,\notag \\
    \beta_{1,3}(s,1,n)&=\binom{N-1}{s-2}f_{(n-1,s+1)}.
\end{align}
The first two initial conditions are straightforward. We now provide a formula for $\beta_{1,3}$ for $t=1$.

\begin{lemma}\label{lem:sum}
\[\beta_{1,3}(s,1,n) = \sum_{\ell=2}^{s}\gamma^\ell(s,n),\] where $\gamma^\ell(s,n)$ is the number of continuous multiline queues of type $\textbf{m}_{s,1}$ with $c_2=\ell, c_n=N$ and $b_1>c_2$, such that the projected words have $1$ and $3$ in the first two places respectively.   
\end{lemma}
\begin{proof}
Consider the following continuous multiline queue $Q$ of type $\textbf{m}_{s,1}$ that is counted by $\beta_{1,3}(s,1,n)$:
\[Q= \begin{array}{cccccccc}
     a_1 & a_2 & \cdots  &\cdots& a_s\\
     b_1 & b_2 & \cdots  &\cdots & b_s & b_{s+1}\\
     c_1 & c_2 & \cdots & \cdots & \cdots &\cdots &c_{n-1} &N.\\
     \hline
     1 & 3 & \cdots & \cdots & \cdots &\cdots& \cdots &1
\end{array}\]
Let $c_2$ be equal to $\ell$, which is greater than $1$. According to  Proposition~\ref{prop:wrapping}, $c_2<b_1$. Therefore, the values $\{1,\ldots,\ell-1\}$ are assigned to $c_1,a_1,\ldots,a_{\ell-2}$. Since there exists $i$ such that $a_i\rightarrow b_s$ and $a_{i+1}\rightarrow b_{s+1}$, $\ell$ ranges from $\{2,\ldots, s\}$. The set of all continuous multiline queues counted by $\beta_{1,3}(s,1,n)$ can be divided into smaller sets depending on the value of $\ell$. We denote the number of such continuous multiline queues that have $c_2=\ell$ as $\gamma^\ell(s,n)$. Summing over all possible values of $\ell$, we obtain the required expression.
\end{proof}

Next, we derive a formula for $\gamma^\ell$ from first principles.  First, consider a skew shape $\lambda/\mu$, where $\lambda$ and $\mu$ are partitions such that $\mu \subseteq \lambda$ are in containment order. Recall from Section~\ref{sec:syt}, that the number of standard Young tableaux of a skew shape $\lambda/\mu$ is given by $f_{\lambda/\mu}$, which can be computed using \eqref{frob}.
\begin{theorem}\label{thm:gammaexp}For $2 \leq \ell \leq s$, we have
\begin{multline}\label{gammaexpr} 
\gamma^\ell (s,n) = (\ell-1)\sum_{i=\ell-1}^{s-1} \,\sum_{j=\ell-2}^{i-2}\, \sum_{k=2}^{n-s+i-1} f_{(n-s+i-3,i-2,j-\ell+2)/(n-s+i-k-1)}  \\ 
\times (f_{(n+i-k-j,s-j,s-j)/(i-j+1,i-j-1)}-f_{(n+i-k-j-1,s-j,s-j)/(i-j,i-j-1)}).
\end{multline}
\end{theorem}
\begin{proof}
Let $c_2=\ell$. Then, $\gamma^\ell(s,n)$ counts the number of continuous multiline queues $Q$ with the following configuration.
\[Q=\begin{array}{ccccccccc}
     a_1 & a_2  &\cdots& a_s\\
     b_1 & b_2 & \cdots  & b_s & b_{s+1}\\
     c_1 & \ell  &\cdots & \cdots  &\cdots & N,\\
     \hline
     1 & 3 & \cdots & \cdots & \cdots & 1
\end{array}\]
where $b_1>c_2$, such that $c_1,a_1,\ldots a_{\ell-2} \in [\ell-1]$. We can choose $c_1$ in $(\ell-1)$ different ways, which determines the values of $a_1$ to $a_{\ell-2}$. These values must satisfy $a_u<b_u$ for $1\leq u \leq \ell-2$, implying that  $a_u \rightarrow b_u$ for each $u$.

Since $Q$ is of type $(s,1,n-s-1)$, there is exactly one $b_i$ that is not bullied by any $a_j$, for some $i \in \{\ell-1,\ldots, s-1\}$.
For a fixed $i$, we can split the set of elements of a continuous multiline queue into two sets based on their relation with $b_i$. Let $j$ and $k$ be the largest integers such that $a_j < b_i$ and $c_k < b_i$ respectively. The value of $j$ lies between $\ell-2$ and $i-1$ by the choice of $i$ and $j$. Additionally, $n-k>s-i$ ensures that there is no more than one wrapping from the second row to the third row, which implies that $k$ lies between $0$ and $n-s+i-1$. For the continuous multiline queues with at most one wrapping from the second row to the third row, the following inequalities hold for fixed $i,j$ and $k$  according to Proposition~\ref{prop:wrapping}.

\[\begin{array}{ccccccccccccc}
     & &a_{\ell-1}  & < & \ldots & <  & a_j\\
     & &\wedge & & \cdots & & \wedge\\
     &b_{1}  < & \ldots & < &\ldots & < & b_{i-1}\\
     & \wedge& \cdots &  & \wedge & & & \\
     c_3 <&\ldots & \ldots & < & c_k
\end{array}
< b_i < 
\begin{array}{ccccccccccccc}
        &a_{j+1}   < & \ldots &\ldots  < & \ldots & <a_s\\
       &\wedge  &  &     \wedge & \wedge\\
       &b_{i+1}  <  &\ldots  & <b_s  < & b_{s+1}\\
        &\wedge  &  & \wedge & & \\
     c_{k+1}  <  &\cdots  <& \cdots  & c_n.
\end{array}\]

These arrangements are counted by the product of hook length formulas of appropriate skew shapes, that is, by
\begin{equation}\label{prod1}
    f_{\lambda_1/\mu_1}.f_{\lambda_2/\mu_2},
\end{equation}
where $\lambda_1/\mu_1=(n-s+i-3,i-1,j-\ell)/(n-s+i-k-1)$, and $\lambda_2/\mu_2=(n+i-k-j,s-j,s-j)/(i-j+1,i-j-1)$.
To obtain the required number of continuous multiline queues with exactly one wrapping from the second row to the third row, we have to remove the continuous multiline queues with no wrapping from the second row to the third row from the above set. These multiline queues are determined by the following inequalities:
 \[\begin{array}{ccccccccccccc}
     &&  a_{\ell-1}   < & \cdots & <  a_j\\
     &&  \wedge  &  &  \wedge\\
     &b_{1} < &\cdots  <&\cdots & <  b_{i-1}\\
     & \wedge  \ &   &\wedge &  \\
     c_3 <  &\cdots & \cdots< &   c_k
\end{array}
< b_i < 
\begin{array}{ccccccccccccc}
        &a_{j+1}   < & \cdots  < &\cdots  <&\cdots &<a_s\\
       &\wedge & &  &  \wedge\\
       &b_{i+1}  <  &\cdots< & b_s< & b_{s+1}\\
        &\wedge & &  \wedge & & \\
     c_{k+1}  <  &\cdots <& \cdots  < & c_n.
\end{array}\]
The number of these arrangements is 
\begin{equation}\label{prod2}
f_{\lambda_1/\mu_1}.f_{\lambda_3/\mu_3},
\end{equation}
where  $\lambda_3/\mu_3=(n+i-k-j-1,s-j,s-j)/(i-j,i-j-1)$. Summing the difference of two products in~\eqref{prod1} and ~\eqref{prod2} over all possible values of $i,j,k$, and multiplying the sum with $\ell-1$ for each choice of $c_1$, we prove the result.
\end{proof}
\begin{remark}
\label{rem:triplesum}
Unfortunately, we have not been able to derive a closed-form expression for the sum on the right-hand side of \eqref{gammaexpr}. However, based on extensive numerical checks, we have formulated a conjecture for $\gamma^\ell(s,n)$.
\end{remark}
\begin{conj}\label{conj:gamma}
We have, \[\gamma^\ell(s,n)= (\ell-1)\binom{n+2s-\ell}{s-\ell}f_{(n-1,s+1)}. \]
\end{conj}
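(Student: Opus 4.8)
The plan is to combine a recursive reduction in the style of Lemma~\ref{lem:rec} with a careful analysis of one ``two–species'' base case, guided by the observation that the conjectured answer is itself a skew standard Young tableau count. Write $N=n+2s+1$. The first remark is that the target $(\ell-1)\binom{n+2s-\ell}{s-\ell}f_{(n-1,s+1)}$ equals $(\ell-1)\,f_{\Lambda}$, where $\Lambda$ is the \emph{disconnected} skew shape built from the straight shape $(n-1,s+1)$ together with a disjoint single row of length $s-\ell$: indeed, for a skew shape that is a disjoint union of two components of sizes $p$ and $q$ one has $f_{\lambda/\mu}=\binom{p+q}{p}\,f_{(\mathrm{first})}f_{(\mathrm{second})}$, and $f_{(s-\ell)}=1$, so $\binom{(n+s)+(s-\ell)}{s-\ell}f_{(n-1,s+1)}=\binom{n+2s-\ell}{s-\ell}f_{(n-1,s+1)}$. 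Under this reading Theorem~\ref{thm:gammaexp} becomes the assertion that $\sum_{i,j,k}\bigl(f_{\lambda_1/\mu_1}f_{\lambda_2/\mu_2}-f_{\lambda_1/\mu_1}f_{\lambda_3/\mu_3}\bigr)$ collapses to $f_{\Lambda}$; this invites a Lindström--Gessel--Viennot / Aitken-determinant evaluation, where one expresses each $f$ via \eqref{frob}, reads the indices $i,j,k$ as the heights at which families of non-intersecting lattice paths cross three fixed lines, and recognises the sum as a Laplace-type expansion of one larger determinant that evaluates to $f_{\Lambda}$.

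In parallel I would pursue a recurrence. First peel off the factor $\ell-1$: in any continuous multiline queue counted by $\gamma^{\ell}(s,n)$ the values $1,\dots,\ell-1$ occupy $c_1,a_1,\dots,a_{\ell-2}$ with $a_u\to b_u$, and a relabelling bijection shows the count is $\ell-1$ times the number of such queues with $c_1=1$ fixed. Next, generalise the statistic to $\gamma^{\ell}(s,t,n)$, the analogous count for type $m_{s,t}=(s,t,n-s-t)$; by the argument of Lemma~\ref{lem:sum} one still has $\beta_{1,3}(s,t,n)=\sum_{\ell=2}^{s}\gamma^{\ell}(s,t,n)$. Running the case analysis of Lemma~\ref{lem:rec} with $c_2=\ell$ held fixed and tracking the entry of value $c_2+1=\ell+1$ — which is necessarily $a_{\ell-1}$, $b_1$, or $c_3$ — one deletes that entry and checks, exactly as there, that the resulting queue still has $c_2=\ell$, still has a single wrapping out of the second row, and still projects to a word beginning $1,3$; this should yield
\[
\gamma^{\ell}(s,t,n)=\gamma^{\ell}(s-1,t+1,n)+\gamma^{\ell}(s,t-1,n)+\gamma^{\ell}(s,t,n-1)
\]
for $s\ge 2$, $t\ge 1$, $s+t<n$, where the case $t=1$ brings in $\gamma^{\ell}(s,0,n)$ (type $(s,0,n-s)$). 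One then verifies that $P^{\ell}(s,t,n):=(\ell-1)\binom{N_{s,t}-\ell-1}{s-\ell}f_{(n-1,s+t)}$, with $N_{s,t}=n+2s+t$, obeys the same recurrence, using Pascal's rule together with the hook recurrence $f_{(a,b)}=f_{(a-1,b)}+f_{(a,b-1)}$ of Remark~\ref{rem:hookrec}. Since $\gamma^{\ell}\equiv 0$ when $s<\ell$, when $s=1$ (for $\ell\ge 2$), and when $s+t\ge n$ (no particle of class $3$), and $P^{\ell}$ vanishes in exactly these ranges, an induction on $s$, then $n$, then upward on $t$ from $t=1$, reduces the whole statement to the single base case $\gamma^{\ell}(s,0,n)$.

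Thus everything comes down to proving $\gamma^{\ell}(s,0,n)=(\ell-1)\binom{n+2s-\ell-1}{s-\ell}f_{(n-1,s)}$, i.e. the type $(s,0,n-s)$ count: there is no particle of class $2$, the first two rows have equal size and so are forced into a standard-Young-tableau pattern, and there is one wrapping into the bottom row. I would try to settle this either by a direct bijection (the ``free'' entries producing the factor $\binom{n+2s-\ell-1}{s-\ell}$ and the remainder an $f_{(n-1,s)}$-pattern, with the wrapping handled by an inclusion–exclusion of the kind used in Theorem~\ref{thm:part1}), or by a lumping argument in the spirit of Proposition~\ref{prop:incon} relating $\gamma^{\ell}(s,0,n)$ to correlations in a genuine two-species continuous TASEP, or — most likely to succeed — by the determinantal route above specialised to the $t=0$ shapes. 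I expect this base case to be the main obstacle: it is essentially the slice that is left open in Remark~\ref{rem:triplesum}, and the difficulty, as the authors note, is precisely that the wrapping destroys the naive bijection, forcing one to evaluate a sum of products of skew tableau counts rather than read off a single skew count.

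As a consistency check (not part of the proof), one should verify the Vandermonde/hockey-stick identity $\sum_{\ell=2}^{s}(\ell-1)\binom{n+2s-\ell}{s-\ell}=\binom{n+2s}{s-2}$ — provable by writing $\ell-1$ as a sum of ones and iterating the hockey-stick identity — so that summing the conjectured $\gamma^{\ell}(s,n)$ over $\ell$ reproduces $\binom{n+2s}{s-2}f_{(n-1,s+1)}=\beta_{1,3}(s,1,n)$ through Lemma~\ref{lem:sum} and Theorem~\ref{thm:beta}; the tables for $n=5,6$ give further numerical confirmation (for instance $\gamma^{2}(3,5)+\gamma^{3}(3,5)=126+28=154=\beta_{1,3}(3,1,5)$).
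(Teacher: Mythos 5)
This statement is left \emph{open} in the paper: it is stated as a conjecture, and Remark~\ref{rem:triplesum} explicitly concedes that the authors could not evaluate the triple sum of Theorem~\ref{thm:gammaexp} in closed form. So there is no ``paper proof'' to match your argument against; the only question is whether your proposal actually closes the gap, and it does not. Your plan funnels everything into two unexecuted steps. The determinantal/LGV evaluation of the sum in \eqref{gammaexp} is only ``invited'', not carried out --- and collapsing a signed sum of products of skew counts indexed by three parameters into a single $f_{\Lambda}$ is precisely the difficulty the authors could not overcome, so gesturing at a Laplace expansion is not evidence it works. Likewise, the recurrence route terminates at the base case $\gamma^{\ell}(s,0,n)$, which you yourself identify as ``the main obstacle'' and leave unresolved; without it the induction proves nothing. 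In addition, the refined recurrence $\gamma^{\ell}(s,t,n)=\gamma^{\ell}(s-1,t+1,n)+\gamma^{\ell}(s,t-1,n)+\gamma^{\ell}(s,t,n-1)$ is asserted, not proved: Lemma~\ref{lem:rec} is established only for $s,t\geq 2$, and your induction needs it at $t=1$, where the case $b_1=\ell+1$ deletes the last class-$2$ particle and lands in a type-$(s,0,n-s)$ configuration whose combinatorics (in particular the meaning of ``exactly one wrapping'' and of a projected word beginning $1,3$ with no $2$'s) you have not checked.

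What you do have right is worth keeping as supporting evidence rather than proof: the observation that $\binom{n+2s-\ell}{s-\ell}f_{(n-1,s+1)}$ is the SYT count of a disconnected skew shape is correct and suggestive; the verification that $P^{\ell}(s,t,n)=(\ell-1)\binom{N_{s,t}-\ell-1}{s-\ell}f_{(n-1,s+t)}$ satisfies the three-term recurrence via Pascal's rule and Remark~\ref{rem:hookrec} is correct; and the hockey-stick identity $\sum_{\ell=2}^{s}(\ell-1)\binom{n+2s-\ell}{s-\ell}=\binom{n+2s}{s-2}$ correctly shows the conjecture is consistent with Lemma~\ref{lem:sum} and Theorem~\ref{thm:beta} (which the paper proves independently via Theorem~\ref{thm:n13}). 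But consistency with a known total over $\ell$ cannot determine the individual summands, so none of this substitutes for either the base case or the determinant evaluation. As it stands, the statement remains a conjecture.
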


\noindent When this conjecture is plugged into Lemma~\ref{lem:sum}, it leads to proving the initial condition in \eqref{eq:beta1}. Thus, it would be interesting to find a proof of the conjecture and demonstrate that a tedious-looking triple summation could be simplified into a simple product.

\section*{Acknowledgements}
 
We would like to thank our advisor Arvind Ayyer for all the insightful discussions during the preparation of this paper. We are grateful to Svante Linusson for his fruitful suggestions regarding the proof of Proposition~\ref{prop:incon}. We thank Christoph Koutschan for helping with an earlier approach towards the proof of Theorem~\ref{thm:i<j}.
We also thank the anonymous referees for their valuable revisions and corrections that have significantly improved the quality of the paper.
The second author was supported in part by a SERB grant CRG/2021/001592.

\end{document}